\documentclass[11pt]{article}

\usepackage{a4wide}
\usepackage[utf8]{inputenc}
\usepackage{amsmath}
\usepackage{amssymb}
\usepackage{amsthm}
\usepackage{amsopn}
\usepackage{xcolor}
\usepackage{nicefrac}
\usepackage{enumitem}
\usepackage{cancel}

\newtheorem{theorem}{Theorem}
\newtheorem{lemma}{Lemma}
\newtheorem{remark}{Remark}
\newtheorem{proposition}[theorem]{Proposition}

\usepackage[hidelinks]{hyperref}

\newcommand{\R}{\mathbb{R}}
\newcommand{\RN}{\mathbb{R}^N}

\setcounter{tocdepth}{1}

\title{\textsc{Existence issues for a large class of degenerate elliptic equations with nonlinear Hamiltonians}}

\author{I. Birindelli, G. Galise, A. Rodr\'iguez}
\date{}
\vspace{-5ex}

\begin{document}
\maketitle
\begin{abstract}
\noindent We give sufficient conditions for the existence and uniqueness, in bounded uniformly convex domains $\Omega$, of solutions of degenerate elliptic equations depending also on the nonlinear gradient term $H$, in term of the size of $\Omega$, of the forcing term $f$ and of $H$. The results apply to a wide class of equations, having as principal part significant examples, e.g. linear degenerate operators, weighted partial trace operators and the homogeneous Monge-Ampère operator.   
\end{abstract}

\vspace{0.5cm}

{\small
\noindent
\textbf{MSC 2010:} 	35B51, 35D40, 35J25, 35J70, 35J96. 

\smallskip
\noindent
\textbf{Keywords:} Degenerate elliptic equations, viscosity solutions, uniform convex domains.  }
	
	%
	
\section{Introduction}
We study the solvability of the Dirichlet problem
		\begin{equation}\label{genEq}
			\left\{
				\begin{array}{cl}
					F(x,D^2u) + H(Du) = f(x) & \textrm{in }\Omega\\
					u = 0 & \textrm{on }\partial\Omega
				\end{array}
			\right.
		\end{equation}
under mild assumptions on the degenerate elliptic operator $F$, in bounded uniformly 
convex domains $\Omega\subset\R^N$ and with power type  Hamiltonian $H$.  
The relevance of \eqref{genEq}, at least for $F(x,D^2u)=\Delta u$, is well known and we will not attempt to list the papers devoted to that problem.  We just wish to mention that, beside its intrinsic relevance it is important to study existence of  solutions of \eqref{genEq} because it is related to the so called ergodic constant  as it is well described in the 
the work of Porretta \cite{PorLin}, see also Remark \ref{por}.

\smallskip We will state the precise assumptions in the body of the paper but the prototype equation to be kept in mind is
\begin{equation}\label{mainexample}
	a(x)\lambda_N(D^2u)+b|Du|^p=f(x)\quad\text{in $\Omega$},
\end{equation}
where $b,p>0$ and $a(x)\geq \beta>0$, $f(x)$ are continuous functions. Here, and in the whole paper, 
for any $N\times N$ symmetric matrix $X$, 
 $\lambda_1(X),\dots,\lambda_N(X)$ are the ordered increasing eigenvalues. 

\smallskip
The scope is to give conditions on $f$, $\Omega$ and $H$ in order to prove existence 
and uniqueness of viscosity solutions of \eqref{genEq}.

We will describe some of these conditions in the prototype equation \eqref{mainexample}. When $p>1$, 
we prove existence and uniqueness for uniformly convex domains $\Omega$  such that
		\begin{equation}\label{suffcondint}
		\Omega=\bigcap_{y\in Y}B_R(y)\qquad\text{for some}\qquad R\leq{\bar R}:=\frac{\beta(p-1)^{\frac{p-1}{p}}}{p\,b^\frac1p\,\left\|{f}^{^-}\right\|_\infty^{\frac{p-1}{p}}}\,,
		\end{equation}
		where, as usual, $B_R(y)$ is the ball of radius $R$ with center at $y$ and $Y\subset\R^N$. 
Observe that in the above condition,  $R$ measures the convexity of $\Omega$. Indeed if the domain is $C^2$, which may not to be case, then $R\geq\frac{1}{\kappa}$ where $\kappa=\min\left\{\kappa_i(x):\,i=1,\ldots,N-1,\;x\in\partial\Omega\right\}$ and $\kappa_1(x),\ldots,\kappa_{N-1}(x)$ are the principal curvatures of $\partial\Omega$ at $x$, see \cite{BGI}.

The equation  \eqref{mainexample} can be equivalently written as
	$$
	a(x)\max_{|\xi|=1}\left\langle D^2u\,\xi,\xi\right\rangle+b|Du|^p=f(x)\quad\text{in $\Omega$.}
	$$
Note that the left hand side of the above equation is, roughly speaking, a big quantity in view of the \lq\lq maximum\rq\rq\,  and of the  positive gradient term. Then a negative right hand side forces the largest eigenvalue $\lambda_N(D^2u)$   to be very negative to balance $f(x)$ and so a solution $u$ of \eqref{mainexample}  to be concave in $\Omega$. This consideration suggests that the convexity of the domain is somehow needed to allow the principal part to absorb both the gradient part of the equation and the data $f$.

The existence of solution is done via a Perron's method. We wish to emphasize that the construction of a supersolution null on the boundary is quite original and far from obvious. It relies on the existence of $C^2$-radial solutions of
	\begin{equation}\label{extremeBallEq2}
			\left\{
				\begin{array}{cl}
					\beta\lambda_N(D^2u) + b|Du|^p = -M & \textrm{in } B_R\\
					u = 0 & \textrm{on } \partial B_R,
				\end{array}
			\right.
		\end{equation}
with $b,M>0$, which exist as soon as $R\leq\bar R$, see  Proposition \ref{existOnBallProp}. In a similar fashion, we also construct singular radial solutions in $B_R(0)\backslash\left\{0\right\}$, see Proposition \ref{blow-up solutions}.

Let us remark that \eqref{suffcondint}, which gives a sufficient condition for the solvability of the Dirichlet problem (1), is in fact not very far from being sharp. 
Indeed we will infer by Remark \ref{blowup} and Proposition \ref{CP} (which gives uniqueness of solutions) that  \eqref{extremeBallEq2} cannot have $C^2$-radial solutions if ${R>\bar R}$. 

The necessity of balancing the size of the domain, the forcing term $f$ and the coefficient of the gradient part is already present in the uniformly elliptic case, both linear and nonlinear, see e.g. \cite{BPT} and  \cite{GMP,S}.
		
In the sublinear case, i.e. for $p\in (0,1)$, the size of the uniform convexity of 
$\Omega$ doesn't play a role, but we need to restrict in a qualitative way the class of 
functions $f$ in order to ensure that the comparison principle holds.

The existence and nonexistence issues in the case $p=1$ have been already treated in \cite{BGI,BGI2}.

	
\smallskip We wish to emphasize that one of the main interest of this paper is that the 
class of equations we consider is very large, since the conditions on $F$ and $H$ are very mild. In particular $F$ needs not be neither in divergence form, nor linear and nor uniformly elliptic. 
We will show in Section 2 that, e.g., the following operators are included:

\begin{enumerate}
\item Linear degenerate elliptic operators $$F(x,D^2u)=\text{Tr}(\Sigma^T(x)\Sigma(x)D^2u)$$ as long as $\lambda_N(\Sigma^T(x)\Sigma(x))>0$, e.g. $F(x,D^2u)=u_{x_1x_1}$;
\item Nonlinear degenerate operators, functions of the eigenvalues, e.g. $$F(D^2u)=\sum_{i=1}^N\alpha_i\lambda_i(D^2u)$$ with $\alpha_i\geq 0$ and $|(\alpha_1,\cdots,\alpha_N)|>0$;
\item The homogenous Monge-Amp\`ere operator $$F(D^2u)=\left(\det(D^2u)\right)^\frac1N, \qquad D^2u\geq0.$$
We remark that, since the map $u\mapsto \det(D^2u)$ is elliptic only if $u$ is constrained to be in the positive cone of convex functions, we shall treat this case separately at the end of Sections \ref{super}-\ref{sub};
\end{enumerate}
Concerning the Hamiltonian $H$, let us mention that we don't require that the Hamiltonian be convex or be precisely a power. For example, compact perturbations of power type Hamiltonians, i.e.
$$H(Du)=|Du|^p+\phi(Du)$$
with $\phi\in C^1_c(\R^N)$, satisfy the assumptions.

	\section{Notations and basic assumptions}
	Let $\Omega\subset \RN$ be an open, bounded domain. We say that $\Omega$ is uniformly convex if there exist $R>0$ and $Y\subset\RN$, depending on $\Omega$, such that 
	\begin{equation}\label{unifconv}
	\Omega=\bigcap_{y\in Y}B_R(y).
	\end{equation}
	
	Let $\mathbb S^N$ be the linear space of  $N\times N$ symmetric matrices. For any $X\in\mathbb S^N$ we denote by $\lambda_i(X)$, for $i=1,\ldots,N$, the eigenvalues of $X$ arranged in nondecreasing order:
	$$
	\lambda_1(X)\leq\ldots\leq\lambda_N(X).
	$$
	
The norm of $X\in\mathbb S^N$ is $\left\|X\right\|=\max\left\{|\lambda_1(X)|,|\lambda_N(X)|\right\}$.	
	
	\bigskip
	
	\noindent We introduce the hypotheses on $F\in C\left(\Omega\times\mathbb S^N;\R\right)$ and $H\in C(\RN;\R)$:

	\begin{enumerate}[label=(F\arabic*)]
		\item\label{ellipticity} there exists  $\beta>0$ such that for any  $(x,X)\in\Omega\times\mathbb S^N$
					\begin{equation}\label{deg}
						F(x,X+Y) - F(x,X) \leq\beta\lambda_N(Y)\qquad\forall\, Y\leq0\,;
					\end{equation}
		\item\label{hom} $F$ is positive $1$-homogeneous, i.e. $F(x,\sigma X) = \sigma F(x,X)$ for all $\sigma>0$, $(x,X)\in\Omega\times\mathbb S^N$.
		
		\end{enumerate}
		As far as $H$ is concerned we shall assume either:
		\begin{enumerate}[label=(H\arabic*)]
		\item\label{H1}
					there exist $b,c>0$,  $p>1$ such that for any $\xi,\eta\in\RN$
					\begin{equation}\label{gradGrowth}
						H(\sigma\eta+(1-\sigma)\xi)- \sigma H(\eta)\leq (1-\sigma)\left(b|\xi|^p+c\right)\qquad\forall \sigma\in[0,1]
					\end{equation}
					and moreover $H$ is bounded from below, i.e. $H(\xi)\geq-d$ for some $d>0$; 
					
					\medskip
					or
					\medskip
					\item\label{H2}  there exist $b,c>0$, $p\in(0,1)$ such that 
	\begin{equation*}
	\begin{split}
	\varepsilon H(\xi)&\leq H(\varepsilon\xi)\qquad\forall(\varepsilon,\xi)\in(0,1)\times\RN\\
	0\leq H(\xi)&\leq b|\xi|^p+c\qquad\forall\xi\in\RN.
	\end{split}
	\end{equation*}
					
		\end{enumerate}
					%
		
		\noindent Finally we shall require that the comparison principle assertion holds for \eqref{genEq} with strict inequality, i.e. 
		
		\begin{enumerate}
			\item[(CC)] if $u\in USC(\overline\Omega)$ and $v\in LSC(\overline\Omega)$ are respectively subsolution and strict supersolution of \eqref{genEq}, then $u\leq v$ in $\overline\Omega$.
		\end{enumerate}
		
	\noindent
	A well known sufficient condition for the validity (CC) is: 
	
	\begin{itemize}
		\item[(SuffCC)]\label{Ishii} there exists   a modulus of continuity $\omega$ such that 
		$$F(x,X)-F(y,Y)\leq\omega(\alpha|x-y|^2+|x-y|)$$
		whenever $(\alpha,x,y,X,Y)\in\mathbb R_+\times\Omega^2\times(\mathbb S^N)^2$ and 
		\begin{equation}\label{condcomp}
		-3\alpha\left(
		\begin{array}{cc}
		I & 0\\0 & I
		\end{array}
		\right)
		\leq
		\left(
		\begin{array}{cc}
		X & 0\\0 & -Y
		\end{array}
		\right)
		\leq3\alpha\left(
		\begin{array}{cc}
		I & -I\\-I & I
		\end{array}
		\right).
		\end{equation}
	\end{itemize}
	
	Let us recall that (SuffCC) is always fulfilled if $F$ is a degenerate elliptic operator independent of the $x$-variable, see \cite[Example 3.6]{CIL}
	
	\begin{remark}
	\rm It is possible to consider Hamiltonian $H$ depending  on $x$ and satisfying \ref{H1}-\ref{H2} as long as the comparison principle (CC) still holds. 
	\end{remark}
	
	\subsection{Comments and examples}
	\subsubsection{The principal part $F$}\label{examples F}
	The assumption \ref{ellipticity} can be equivalently stated as follows: for any $(x,X)\in\Omega\times\mathbb S^N$
	\begin{equation}\label{deg2}
						F(x,X+Y) - F(x,X) \geq\beta\lambda_1(Y)\qquad\forall\, Y\geq0\,.
					\end{equation}
	It readily follows from \eqref{deg2} that  $$F(x,X+Y)\geq F(x,X)\qquad\forall\, Y\geq0,$$ i.e. $F$ is degenerate elliptic.
	
	The class of operators satisfying the ellipticity condition   \eqref{deg2}, as well as \ref{hom}-(CC), is quite large. It includes some important examples:
	\noindent
	\begin{enumerate}
		\item \textbf{Strictly elliptic operators}
		
		Let $F:\Omega\times\mathbb S^N\mapsto\R$ be continuous and strictly elliptic, i.e. there exists $\nu>0$ such that for any $(x,X)\in\Omega\times\mathbb S^N$
		$$
		F(x,X+Y)-F(x,X)\geq\nu\,\text{Tr}(Y)\qquad\forall\, Y\geq0.
		$$
		Then $F$ satisfies \eqref{deg2} with $\beta=N\nu$.\\ 
		Concerning the validity of (CC), some sufficient conditions can be found in \cite[Theorem III.1]{IL}. Here we just recall the following one: 
		$$
		\left|F(x,X)-F(y,X)\right|\leq\overline\omega(|x-y|(1+\left\|X\right\|))\qquad\forall(x,y,X)\in\Omega^2\times\mathbb S^N,
		$$
		where $\overline\omega$ is a modulus of continuity such that $\frac{\overline\omega(r)}{1+r}$ is bounded for $r\geq0$. 
		
		\item \textbf{Degenerate linear operators}
		
		For $(x,X)\in\Omega\times\mathbb S^N$ we let $$F(x,X)=\text{Tr}(\Sigma^T(x)\Sigma(x)X),$$ where $\Sigma^T(x)\Sigma(x)\geq0$ and  
		\begin{equation}\label{131eq1}
		\inf_{x\in\Omega}\lambda_N(\Sigma^T(x)\Sigma(x))>0.
		\end{equation}
		To check \eqref{deg2}, let $\left\{v_1(x),\ldots,v_N(x)\right\}$ be an orthonormal basis of eigenvectors of the matrix $\Sigma^T(x)\Sigma(x)$. Then		
		\begin{equation*}
		\begin{split}
		F(x,X+Y)-F(x,X)&=\text{Tr}(\Sigma^T(x)\Sigma(x)Y)\\&=\sum_{i=1}^N\left\langle Yv_i(x),\Sigma^T(x)\Sigma(x)v_i(x)\right\rangle\\&\geq\left(\sum_{i=1}^N\lambda_i(\Sigma^T(x)\Sigma(x))\right)\lambda_1(Y).
		\end{split}
		\end{equation*}
		The structural condition \eqref{deg2} is then satisfied with $\displaystyle\beta=\inf_{x\in\Omega}\lambda_N(\Sigma^T(x)\Sigma(x))$. The operator $F$ also satisfies (SuffCC) if $\Sigma(x)$ is assumed to be Lipschitz continuous, see \cite[Example 3.6]{CIL}.
		
		\smallskip
		In particular, choosing $\Sigma(x)=e_i\otimes e_i$ for  $i=1,\ldots,N$, where $\left\{e_1,\ldots,e_N\right\}$ is the standard basis of $\R^N$, we infer that the  equations  $$\frac{\partial^2u}{\partial x_i^2}+H(Du)=f(x)$$
		fit in our framework.
		\item \textbf{Nonlinear degenerate operators, functions of the eigenvalues}
		
		Let $\alpha=(\alpha_1,\ldots,\alpha_N)$ be such that $\alpha_i\geq0$ for $i=1,\ldots,N$ and $|\alpha|>0$. Set
		\begin{equation}\label{exdeg}
		F(X)=\sum_{i=1}^N\alpha_i\lambda_i(X).
		\end{equation}
		It is easy to check that \eqref{deg2} is satisfied with $\displaystyle\beta=\sum_{i=1}^N\alpha_i$, just using the inequality
		\begin{equation}\label{eigenvalueinequality}
		\lambda_i(X+Y)-\lambda_i(X)\geq\lambda_1(Y)
		\end{equation}
		which holds for any $i\in\left\{1,\ldots,N\right\}$ and for any $X,Y\in\mathbb S^N$.
		 Moreover, since $F$ in \eqref{exdeg} is independent of the $x$-variable, then (SuffCC) holds as well.		
		
		By \eqref{exdeg} we include in particular the truncated Laplacians (see \cite{BGI,BGI2}) 
		$${\mathcal P}_k^+(X)=\lambda_1(X)+\ldots+\lambda_k(X)\quad\,\text{and}\quad\, {\mathcal P}_k^+(X)=\lambda_{N-k+1}(X)+\ldots+\lambda_N(X),$$ 
the min-max operators considered in  \cite{FV} 
$$F(X)=\lambda_1(X)+\lambda_N(X)$$
		and $$F(X)=\lambda_i(X)$$
		for some $i\in\left\{1,\ldots,N\right\}$, see \cite{BR}.
		
		\smallskip
		
		We stress that the ellipticity condition \eqref{deg2} is required to be satisfied only for nonnegative matrices $Y$ and not for any $Y\in\mathbb S^N$. This fact allows us to consider, for instance, the operator
		$$
		F(X)=\lambda_i(X)-\left(\lambda_j(X)\right)^-
		$$
		where $i,j\in\left\{1,\ldots,N\right\}$ and $t^-=\max\left\{-t,0\right\}$ for any $t\in\R$. It is easy to check that $F(X)$ is  positive homogeneous of degree one and it satisfies \eqref{deg2} for any $\beta\in(0,1]$. For this is sufficient to use the inequality \eqref{eigenvalueinequality} and the monotonicity of the map $t\mapsto t^-$. Moreover for $X=0$ and $Y=I$  it turns out that
		$$
		F(X+Y)-F(X)=1=\lambda_1(Y),
		$$
	showing that the best constant $\beta$ we can take in \eqref{deg2} is $\beta=1$. On the other hand, if \eqref{deg2} were valid  also for $Y\leq0$, then for $X=0$ and $Y=-I$ we would have
	$$
	F(X+Y)-F(X)=-2=2\lambda_1(Y).
	$$
	Hence $\beta\geq2$ which is in contradiction to $\beta\leq1$.
				
		\medskip
		Let's go back now to the  prototype equation \eqref{mainexample}. Suppose that $a(x)\in\text{Lip}(\Omega)$, with constant $L$, and $\displaystyle \inf_{x\in\Omega}a(x)>0$. Let 
\begin{equation*}
F(x,X)=a(x)\lambda_N(X).
\end{equation*}
For any $Y\geq0$ we have
$$
a(x)\lambda_N(X+Y)-a(x)\lambda_N(X)\geq \left(\inf_{x\in\Omega}a(x)\right)\lambda_1(Y).
$$
Then condition \eqref{deg2} is satisfied with $\beta=\displaystyle \inf_{x\in\Omega}a(x)$. \\ We claim that 
(SuffCC) holds.  From \eqref{condcomp}, for any $\xi,\eta\in\mathbb R^N$ we have
$$
\left(
		\begin{array}{cc}
		X & 0\\0 & -Y
		\end{array}
		\right)
		\left(
		\begin{array}{c}
		\sqrt{a(x)}\,\xi \\ \sqrt{a(y)}\,\eta
		\end{array}
		\right)\cdot\left(
		\begin{array}{c}
		\sqrt{a(x)}\,\xi \\ \sqrt{a(y)}\,\eta
		\end{array}
		\right)
		\leq3\alpha\left(
		\begin{array}{cc}
		I & -I\\-I & I
		\end{array}
		\right)\left(
		\begin{array}{c}
		\sqrt{a(x)}\,\xi \\ \sqrt{a(y)}\,\eta
		\end{array}
		\right)\cdot\left(
		\begin{array}{c}
		\sqrt{a(x)}\,\xi \\ \sqrt{a(y)}\,\eta
		\end{array}
		\right)
$$
which leads to 
\begin{equation*}
a(x)X\xi\cdot\xi-a(y)Y\eta\cdot\eta\leq3\alpha\left(a(x)-2\sqrt{a(x)a(y)}\,\xi\cdot\eta+a(y)\right).
\end{equation*}
In particular, choosing $\xi\in\RN$ such that $X\xi\cdot\xi=\lambda_N(X)$, we have
\begin{equation}\label{17eq1}
F(x,X)-a(y)Y\eta\cdot\eta\leq3\alpha\left(a(x)-2\sqrt{a(x)a(y)}\,\xi\cdot\eta+a(y)\right).
\end{equation}
Minimizing both sides of \eqref{17eq1}, among  all $\eta\in\mathbb R^N$ such that $|\eta|=1$, we obtain
\begin{equation*}
\begin{split}
F(x,X)-F(y,Y)&\leq3\alpha\left(a(x)-2\sqrt{a(x)a(y)}+a(y)\right)\\&=3\alpha\left(\sqrt{a(x)}-\sqrt{a(y)}\right)^2.
\end{split}
\end{equation*}
Then using the Lipschitz continuity of $a(x)$
$$
F(x,X)-F(y,Y)\leq\frac{3L^2}{4\displaystyle\inf_{x\in\Omega}a(x)}\,\alpha|x-y|^2.
$$
Hence (SuffCC) is satisfied with $$\omega(r)=\frac{3L^2}{4\displaystyle\inf_{x\in\Omega}a(x)}\,r.$$

\item \textbf{Monge-Ampère operator}

Let
\begin{equation}\label{MA}
F(X)=\left(\det(X)\right)^\frac1N,\qquad X\geq0,
\end{equation}
	be the 1-homogeneous Monge-Ampère operator. The restriction $X\geq0$ is natural in order to  ensure the ellipticity of the map $X\mapsto \det(X)$. In addition the operator \eqref{MA}  satisfies, within the class of nonnegative symmetric matrices,  the structural condition \eqref{deg2}. Indeed, using the matrix identity (see \cite[V.3]{IL}) 
		\begin{equation*}
		\left(\det(X)\right)^\frac1N=\inf\left\{\text{Tr}(XB)\,:\;B\geq0,\,\det B=\frac{1}{N^N}\right\},
		\end{equation*}
		 we then obtain
		\begin{equation*}
		\begin{split}
		\left(\det(X+Y)\right)^\frac1N-\left(\det(X)\right)^\frac1N&\geq\inf\left\{\text{Tr}(YB)\,:\;B\geq0,\,\det B=\frac{1}{N^N}\right\}\\
		&=\left(\det(Y)\right)^\frac1N\geq\lambda_1(Y)\qquad\forall X,Y\geq0,
		\end{split}
		\end{equation*}
		that is  \eqref{deg2} holds with $\beta=1$.

		\item \textbf{Bellman–Isaacs type operators}
		
		We consider a two-parameters family of 1-homogeneous operators $\left\{F_{a,b}\right\}$ depending on $a$ and $b$ running in some sets of indexes $\mathcal A$ and $\mathcal B$. Let 
		\begin{equation}\label{bellman}
		F(x,X)=\sup_a\inf_b F_{a,b}(x,X).
		\end{equation}If we assume that $F_{a,b}$ satisfies  \eqref{deg2} with some $\beta>0$,  independent on  $a\in\mathcal A$ and $b\in\mathcal B$, as well as (SuffCC) with a common modulus of continuity $\omega$, then \eqref{deg2}-(suffCC) are in turn satisfied by \eqref{bellman} with the same $\beta$ and $\omega$.

	\end{enumerate}

	
	\subsubsection{The first order term $H$}
	Typical examples we have in mind are $$H(\xi)=b|\xi|^p\quad\text{and}\quad H(\xi)=\left\langle A\xi,\xi\right\rangle^{\frac p2}$$ with $0\leq A\leq b^{\frac2p}I$ in $\mathbb S^N$. They satisfies (H1) and (H2) for $p>1$ and $p\in(0,1)$ respectively.
	
	\smallskip
	A comment on \eqref{gradGrowth} in (H1) is in order. Such condition  is a convexity type assumption which is in particular satisfied, with $c=0$, by convex Hamiltonian such that $H(\xi)\leq b|\xi|^p$.   But we point out that \ref{H1} also includes nonconvex Hamiltonians, as shown in the next Lemma.

	\begin{lemma}
	If $\phi\in C^1_c(\RN)$ then $H(\xi)=|\xi|^p+\phi(\xi)$ satisfies \ref{H1}.
	\end{lemma}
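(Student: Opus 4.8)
The plan is to verify the two requirements contained in \ref{H1}. The second one, boundedness from below, is immediate: since $|\xi|^p\ge 0$ and $\phi$ is continuous with compact support, $H(\xi)=|\xi|^p+\phi(\xi)\ge-\|\phi\|_\infty$, so one may take $d=\|\phi\|_\infty$. For the convexity-type inequality \eqref{gradGrowth}, write $H=H_0+\phi$ with $H_0(\xi):=|\xi|^p$. Since $p>1$, the map $\xi\mapsto|\xi|^p$ is convex, so, setting $z:=\sigma\eta+(1-\sigma)\xi$ for $\sigma\in[0,1]$, one has $H_0(z)\le\sigma H_0(\eta)+(1-\sigma)H_0(\xi)$, i.e. $H_0(z)-\sigma H_0(\eta)\le(1-\sigma)|\xi|^p$, which is already of the required form (coefficient $1$, no additive constant). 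It therefore suffices to bound the perturbation $\phi(z)-\sigma\phi(\eta)$ by $(1-\sigma)\big(b_1|\xi|^p+c_1\big)$ with $b_1,c_1$ depending only on $\phi$; adding the two estimates then yields \eqref{gradGrowth} with $b=1+b_1$ and $c=c_1$.

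To estimate $\phi(z)-\sigma\phi(\eta)$, let $L:=\|D\phi\|_\infty$ (so $\phi$ is $L$-Lipschitz), $M:=\|\phi\|_\infty$, and fix $\rho>0$ with $\operatorname{supp}\phi\subset\overline{B_\rho}$; we may assume $\sigma<1$, since for $\sigma=1$ the left-hand side vanishes. Write $\phi(z)-\sigma\phi(\eta)=\big(\phi(z)-\phi(\eta)\big)+(1-\sigma)\phi(\eta)$, where the last term is $\le(1-\sigma)M$. For the first term I would distinguish two cases. If $|\eta|\le\rho+1$, then, since $z-\eta=(1-\sigma)(\xi-\eta)$, the Lipschitz bound gives $\phi(z)-\phi(\eta)\le L(1-\sigma)|\xi-\eta|\le L(1-\sigma)(|\xi|+\rho+1)$, and using $|\xi|\le1+|\xi|^p$ (valid for $p>1$) this is $\le(1-\sigma)\big(L|\xi|^p+L(\rho+2)\big)$. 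If instead $|\eta|>\rho+1$, then $\phi(\eta)=0$, so it is enough to bound $\phi(z)$; this is $\le0$ unless $|z|\le\rho$, in which case $|\eta-z|\ge|\eta|-|z|>1$. Since $|\eta-z|=(1-\sigma)|\eta-\xi|\le(1-\sigma)\big(|\eta-z|+|z-\xi|\big)$, one obtains $\sigma|\eta-z|\le(1-\sigma)|z-\xi|\le(1-\sigma)(\rho+|\xi|)$, hence $1=\sigma+(1-\sigma)\le(1-\sigma)(\rho+|\xi|+1)$, so that $\phi(z)\le M\le(1-\sigma)M(\rho+|\xi|+1)\le(1-\sigma)\big(M|\xi|^p+M(\rho+2)\big)$. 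Collecting the two cases gives $\phi(z)-\sigma\phi(\eta)\le(1-\sigma)\big(\max\{L,M\}\,|\xi|^p+(\rho+2)\max\{L,M\}+M\big)$, which is the desired estimate (one enlarges $c$ trivially if $c>0$ is required in the degenerate case $\phi\equiv0$).

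The only delicate point is the second case: a direct application of the Lipschitz estimate to $\phi(z)-\phi(\eta)$ would produce a term proportional to $(1-\sigma)|\eta|$, which is not allowed on the right-hand side of \eqref{gradGrowth}; this is precisely where the compactness of $\operatorname{supp}\phi$ must be used, through the elementary observation that $z$ cannot belong to $\operatorname{supp}\phi$ while $\eta$ is far from it unless $1-\sigma$ is bounded below (by $1/(\rho+|\xi|+1)$). Everything else is routine bookkeeping of constants, and the convexity of $|\xi|^p$ does the rest.
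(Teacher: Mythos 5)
Your proof is correct, but it takes a genuinely different route from the paper's. You split $H=|\cdot|^p+\phi$ and treat $\phi$ purely as a perturbation: convexity handles the power part with coefficient $1$, and the remainder $\phi(\sigma\eta+(1-\sigma)\xi)-\sigma\phi(\eta)$ is controlled by the Lipschitz bound when $|\eta|\le\rho+1$, and otherwise by the quantitative observation that if the convex combination lies in $\operatorname{supp}\phi$ while $\eta$ is far from it then $1\le(1-\sigma)(\rho+|\xi|+1)$ --- which, as you note, is exactly where compactness of the support enters. The paper argues differently: it fixes $R$ with $\operatorname{supp}\phi\subset B_R$ and the normalization $R^p=\max_{\overline B_R}H$, distinguishes cases according to the position of $\xi$, $\eta$ and the segment $[\xi,\eta]$ relative to $\overline B_R$, and, when both endpoints lie outside $\overline B_R$, compares $H$ with the convex majorant $\overline H=\max\{|\cdot|^p,R^p\}$, which coincides with $H$ at those points. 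What the paper's argument buys is the sharper constant $b=1$ (yours gives $b=1+\max\{\|D\phi\|_\infty,\|\phi\|_\infty\}$); this is irrelevant for the bare validity of \ref{H1}, but it matters quantitatively since $b$ enters the admissible radius $\bar R\sim b^{-1/p}$ in Theorem \ref{exi}, so a smaller $b$ allows larger domains. What your argument buys is modularity: no normalization of $R$, no auxiliary majorant, and it applies verbatim to any compactly supported Lipschitz perturbation, not necessarily $C^1$.
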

	\begin{proof}It is clear that $H$ is bounded from below. We claim that \eqref{gradGrowth} holds with $b=1$ and $c$ large enough (depending on $\phi$). \\ Let $\eta,\xi\in\RN$, $\sigma\in[0,1]$ and let $[\xi,\eta]=\left\{\sigma\eta+(1-\sigma)\xi\,:\;\sigma\in[0,1]\right\}$ be  the segment joining $\xi$ and $\eta$.
If $[\xi,\eta]\cap\text{supp}(\phi)=\emptyset$, then just using the convexity of the map $\xi\mapsto|\xi|^p$, we infer that \eqref{gradGrowth} is satisfied for any  $c\geq0$. So  we shall assume from now on that $[\xi,\eta]\cap\text{supp}(\phi)\neq\emptyset$. Fix $R$ such that $\text{supp}(\phi)\subset B_R$ and 
\begin{equation}\label{2912eq1}
R^p=\max_{\xi\in\overline B_R} H(\xi).
\end{equation}
Set \begin{equation}\label{c}
c=\max\left\{R^p,2R\left\|D\phi\right\|_\infty+\left\|\phi\right\|_\infty\right\}.
\end{equation}

\smallskip
\noindent\textbf{Case 1: $[\xi,\eta]\subset \overline B_R$.}\\
By the convexity of the map $\xi\mapsto|\xi|^p$ and using \eqref{c} we have
\begin{equation*}
\begin{split}
H(\sigma\eta&+(1-\sigma)\xi)-\sigma H(\eta)\\
&=|\sigma\eta+(1-\sigma)\xi|^p-\sigma|\eta|^p+\phi(\sigma\eta+(1-\sigma)\xi)-\phi(\eta)+(1-\sigma)\phi(\eta)\\
&\leq(1-\sigma)|\xi|^p+\left\|D\phi\right\|_\infty|(1-\sigma)(\xi-\eta)|+(1-\sigma)\phi(\eta)\\
&\leq(1-\sigma)\left(|\xi|^p+2R\left\|D\phi\right\|_\infty+\left\|\phi\right\|_\infty\right)\\
&\leq(1-\sigma)\left(|\xi|^p+c\right).
\end{split}
\end{equation*}

\smallskip
\noindent\textbf{Case 2:  $\eta\notin \overline B_R$ and $\xi\in \overline B_R$ (or $\eta\in \overline B_R$ and $\xi\notin \overline B_R$).}\\
We assume that $\eta\notin \overline B_R$ and $\xi\in \overline B_R$, the other case being similar.  We first observe that for any $\sigma\in[0,1]$ such that 
$$
\sigma\eta+(1-\sigma)\xi\in \overline B_R,
$$
then 
$$
H(\sigma\eta+(1-\sigma)\xi)\leq\max_{\overline B_R} H,
$$
while, since $\eta\notin \overline B_R$, it holds that  
\begin{equation*}
\sigma H(\eta)+(1-\sigma)(|\xi|^p+c)\geq\sigma R^p+(1-\sigma)c\geq R^p= \max_{\overline B_R} H.
\end{equation*}
If instead 
$$
\sigma\eta+(1-\sigma)\xi\notin \overline B_R,
$$
then
\begin{equation*}
\begin{split}
H(\sigma\eta+(1-\sigma)\xi)-\sigma H(\eta)&=|\sigma\eta+(1-\sigma)\xi|^p-\sigma|\eta|^p\\
&\leq(1-\sigma)|\xi|^p\leq(1-\sigma)(|\xi|^p+c).
\end{split}
\end{equation*}

\smallskip
\noindent\textbf{Case 3:  $\eta,\xi\notin \overline B_R$.}\\
Let 
\begin{equation*}
\overline H(\xi)=\max\left\{|\xi|^p,R^p\right\}.
\end{equation*}
Note that $\overline H$ is convex in $\RN$ since it is maximum of convex functions. Moreover, by \eqref{2912eq1}, $\overline H(\xi)\geq H(\xi)$ for any $\xi\in\RN$. Then, since $\eta,\xi\notin \overline B_R$, we obtain
\begin{equation*}
\begin{split}
H(\sigma\eta+(1-\sigma)\xi)&\leq\overline H(\sigma\eta+(1-\sigma)\xi)\\
&\leq\sigma\overline H(\eta)+(1-\sigma)\overline H(\xi)\\
&=\sigma H(\eta)+(1-\sigma)|\xi|^p\leq \sigma H(\eta)+(1-\sigma)(|\xi|^p+c)
\end{split}
\end{equation*}
	as we wanted to show.
	\end{proof}
	
	\subsection{Maximal and minimal inequalities}
The structural conditions \ref{ellipticity}-(H1) identify two extremal inequalities which act as  barriers for the whole class of equations we are dealing with.\\	
First observe that, taking $\sigma=0$ in \eqref{gradGrowth}, then for any $\xi\in\RN$
	$$
	-d\leq H(\xi)\leq b|\xi|^p+c.
	$$
From this and using \eqref{deg}, \eqref{deg2} and $F(x,0)=0$ $\forall x\in\Omega$, we infer that the inequalities
	\begin{eqnarray}
	\label{maxinequ}\beta\lambda_N(D^2u)+b|Du|^p\leq f(x)-c &\text{in $\Omega$}\\
	\label{maxinequ2}\beta\lambda_1(D^2u)\geq f(x)+d&\text{in $\Omega$}
	\end{eqnarray}
		play the role of  maximal and minimal inequalities  within the class of concave and convex functions respectively. Namely every concave supersolution of \eqref{maxinequ} is   supersolution of \eqref{genEq} and any convex subsolution of \eqref{maxinequ2} is in turn  subsolution of \eqref{genEq}.
		
		\medskip
		
	\noindent	Similarly, under the sublinear assumption (H2), the inequalities  
		\begin{eqnarray}
	\label{submaxinequ}\beta\lambda_N(D^2u)+b|Du|^p\leq f(x)-c &\text{in $\Omega$}\\
	\label{submaxinequ2}\beta\lambda_1(D^2u)\geq f(x)&\text{in $\Omega$}
	\end{eqnarray}
	are extremal for concave and convex functions respectively.

\section{Superlinear Hamiltonians}\label{super}
Throughout this section we shall assume $p>1$.
\subsection{Existence result on balls}
 Let $M>0$. We consider   		
\begin{equation}\label{extremeBallEq}
			\left\{
				\begin{array}{cl}
					\beta\lambda_N(D^2u) + b|Du|^p = -M & \textrm{in } B_R(y)\\
					u = 0 & \textrm{on } \partial B_R(y).
				\end{array}
			\right.
		\end{equation}

	\begin{proposition}\label{existOnBallProp}
	Suppose that
	\begin{equation}\label{condR}
	R\leq{\bar R}:=\frac{\beta(p-1)^{\frac{p-1}{p}}}{p\,b^\frac1p\,M^{\frac{p-1}{p}}}\,.
	\end{equation}
		 Then there exists  $u$ radial solution  of \eqref{extremeBallEq} such that $$u\in C^2(\overline{B_R(y)})\quad\text{ if}\;\;R<{\bar R}\,,\qquad  u\in C^2(B_R(y))\cap C^1(\overline{B_R(y)})\quad\text{if}\;\; R=\bar R.$$
		Moreover
		\begin{equation}\label{unifestimate}
		\left\|u\right\|_{C^1(\overline{B_R(y)})}\leq \left(\frac{\beta}{{\bar R}pb}\right)^{\frac{1}{p-1}}({\bar R}+1).
		\end{equation}
	\end{proposition}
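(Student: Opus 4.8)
The plan is to look for a radial solution of the form $u(x) = g(|x-y|)$ with $g:[0,R]\to\R$, $g(R)=0$, and reduce \eqref{extremeBallEq} to an ODE. For a $C^2$ radial function, the Hessian $D^2u$ has eigenvalues $g''(r)$ (in the radial direction) and $g'(r)/r$ (with multiplicity $N-1$, in the tangential directions). We will construct $g$ concave with $g'(r)\le 0$, so that $g'(r)/r\le g''(r)$ fails in general — rather, to identify $\lambda_N(D^2u)$ we note that if $g$ is concave then $g''(r)\le 0$; comparing $g''(r)$ and $g'(r)/r$ we should arrange (this is automatic for the natural candidate below) that the largest eigenvalue is $g'(r)/r$. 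Actually the cleaner route: impose that the solution satisfies $\lambda_N(D^2u) = g'(r)/r$; this holds precisely when $g'(r)/r \ge g''(r)$, i.e. when $rg''(r)-g'(r)\le 0$, i.e. $(g'(r)/r)' \le 0$. So the ODE to solve is
\begin{equation*}
\beta\,\frac{g'(r)}{r} + b\,|g'(r)|^p = -M,\qquad r\in(0,R),\qquad g(R)=0,
\end{equation*}
and we will check a posteriori that the solution is concave with $g'\le 0$ and $(g'/r)'\le 0$, so that it is a genuine classical solution of \eqref{extremeBallEq}.

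The key step is to solve this first-order ODE for $v(r):=-g'(r)\ge 0$. It reads $\beta v/r = b v^p - M$, equivalently $r = \beta v/(bv^p-M)$, which implicitly determines $v=v(r)$ provided $bv^p>M$, i.e. $v>(M/b)^{1/p}$. So I would instead regard $r$ as a function of $v$: set $\rho(v) = \beta v/(bv^p - M)$ for $v > v_0:=(M/b)^{1/p}$. One computes $\rho(v)\to +\infty$ as $v\to v_0^+$, $\rho(v)\to 0$ as $v\to\infty$, and $\rho$ has a unique interior maximum at the value $v^\ast$ where $\rho'(v^\ast)=0$; a direct computation gives $bv^\ast{}^p(p-1)=M$, i.e. $v^\ast = (M/(b(p-1)))^{1/p}$, and the maximal value is exactly
\begin{equation*}
\max_{v>v_0}\rho(v) = \rho(v^\ast) = \frac{\beta (p-1)^{\frac{p-1}{p}}}{p\,b^{\frac1p}M^{\frac{p-1}{p}}} = \bar R.
\end{equation*}
This is where the threshold $\bar R$ in \eqref{condR} comes from. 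For $R<\bar R$ there are two branches; I will select the branch $v\in(v^\ast,\infty)$ on which $\rho$ is strictly decreasing, so that $v = \rho^{-1}(r)$ is a well-defined smooth function on $(0,R]$ that stays bounded away from $v_0$ (indeed $v(r)\ge v(R)\ge v^\ast>v_0$ since $\rho$ is decreasing on this branch), hence $bv^p-M>0$ throughout and no degeneracy/blow-up of $v$ occurs. Then $g(r) = \int_r^R v(s)\,ds$ defines the solution; $g\in C^2$ up to $r=R$ when $R<\bar R$, while for $R=\bar R$ we have $r=R$ forcing $v(R)=v^\ast$, at which $\rho'$ vanishes, so $v = \rho^{-1}$ is only $C^0$ (Lipschitz fails in derivative) there — giving $g\in C^1(\overline{B_R})\cap C^2(B_R)$ as claimed. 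One must also check behaviour at $r=0$: since $v(r)\to\infty$ as $r\to 0^+$ along this branch would contradict $\rho$ decreasing — rather, as $r\to 0^+$, since on the chosen branch $\rho$ decreasing means $v\to$ (finite) or $\infty$; here $\rho(v)\to 0$ as $v\to\infty$, so $r\to 0$ corresponds to $v\to\infty$. That seems to make $g'$ unbounded at the center, which is bad. I expect this is the main obstacle, and the resolution is that one should instead pick the *other* branch $v\in(v_0,v^\ast)$ near the center: there $\rho$ is increasing from $+\infty$ (at $v_0$) up to $\bar R$ (at $v^\ast$) — also problematic. The correct statement must be that one takes, for given $R\le\bar R$, the branch giving a bounded smooth $v$ on all of $[0,R]$; concretely, since $\rho(0^+\text{ in }v)$… I would resolve this by working with $g'$ directly via the implicit function theorem: at $r=0$ the equation $\beta g'(r)/r + b|g'(r)|^p = -M$ forces $g'(0)=0$ (else the left side blows up), and near $r=0$ one has $\beta g'(r)/r \approx -M$, i.e. $g'(r)\approx -Mr/\beta$, a smooth start; then continue the solution outward by the implicit function theorem as long as the relevant partial derivative $\beta/r + bp|g'|^{p-1}$ does not vanish — but this is always positive, so the ODE $g'' = $ (something smooth) has a smooth solution as long as $g'$ stays finite, and the condition $R\le\bar R$ is exactly what guarantees it does not escape to the bad region. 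So the real main obstacle is a careful phase-plane/monotonicity analysis showing the solution starting from $g'(0)=0$ remains in the region $\{\beta g'/r + b|g'|^p = -M\}$ with $g'$ bounded precisely when $R\le\bar R$, and that equality $R=\bar R$ is the borderline where $g'(R)$ hits the critical value $-v^\ast$.

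Finally, for the uniform $C^1$ estimate \eqref{unifestimate}: on the selected solution $|g'(r)| = v(r) \le v^\ast$ (the critical/maximal admissible slope on the good branch, attained only in the limiting case) — wait, one needs $|g'|\le$ the value of $v$ at which $\rho(v)=R$ on the chosen branch, and since $R\le\bar R$ this value is controlled. Concretely $|g'(r)|\le \big(\tfrac{M}{b(p-1)}\big)^{1/p}$; rewriting $M$ in terms of $\bar R$ via \eqref{condR} one gets $\big(\tfrac{M}{b(p-1)}\big)^{1/p} = \big(\tfrac{\beta}{\bar R p b}\big)^{1/(p-1)}$ after simplification, giving $\|g'\|_\infty \le \big(\tfrac{\beta}{\bar R p b}\big)^{1/(p-1)}$. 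Then $\|g\|_\infty \le R\|g'\|_\infty \le \bar R\|g'\|_\infty$, and adding the two bounds yields \eqref{unifestimate}: $\|u\|_{C^1(\overline{B_R(y)})} = \|g\|_\infty + \|g'\|_\infty \le (\bar R+1)\big(\tfrac{\beta}{\bar R p b}\big)^{1/(p-1)}$. Translation invariance in $y$ is immediate since the equation and ball are translation-covariant, so it suffices to do everything for $y=0$.
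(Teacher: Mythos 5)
Your overall strategy is the same as the paper's (reduce to the first--order relation between $r$ and $v:=-u'$, identify $\bar R$ as an extremal value, integrate $v$ to get $u$, check the eigenvalue inequality a posteriori), but the execution contains a sign error that derails the central step. From $\beta g'/r+b|g'|^p=-M$ and $v=-g'\ge 0$ one gets $\beta v/r=b\,v^p+M$, i.e. $r=\rho(v):=\beta v/(b v^p+M)$, \emph{not} $r=\beta v/(bv^p-M)$. With the correct $\rho$, which is defined for all $v\ge0$, vanishes at $v=0$ and at infinity, and has a unique interior maximum $\rho(v^\ast)=\bar R$ at $v^\ast=\bigl(\tfrac{M}{b(p-1)}\bigr)^{1/p}$, your picture becomes coherent: for $R\le\bar R$ one takes the \emph{lower} branch $v=s_0(r)$ (the inverse of $\rho$ on $[0,v^\ast]$), which satisfies $s_0(0)=0$ (so no singularity at the center), is increasing, and reaches $v^\ast$ only at $r=\bar R$; the upper branch produces the singular solutions of Proposition \ref{blow-up solutions}. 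With your $\rho(v)=\beta v/(bv^p-M)$ the function is strictly decreasing on $(v_0,\infty)$ (its derivative has numerator $-\beta\bigl(M+b(p-1)v^p\bigr)<0$), so it has no interior maximum at all, contradicting your own claim that $\rho'(v^\ast)=0$ with maximal value $\bar R$; this inconsistency is what forces the back-and-forth about branches and the worry that $v\to\infty$ as $r\to0$. You then explicitly defer the decisive point — that the solution with $g'(0)=0$ stays bounded on $[0,R]$ exactly when $R\le\bar R$ — calling it "the real main obstacle"; but that is the heart of the existence proof, and it is settled only by the corrected monotonicity analysis of $\rho$ (equivalently, the paper's analysis of the first zero $s_0(r)$ of $\varphi(r,s)=-\beta\tfrac sr+bs^p+M$).

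Beyond this, several claims of the statement are asserted but never verified: the eigenvalue inequality $u''\le u'/r$ (the paper derives it from $u'/r=-\tfrac1\beta\bigl(M+b(-u')^p\bigr)$ together with the monotonicity of $s_0$), the $C^2$ regularity at the origin (one needs $s_0\in C^1$ up to $r=0$ with $s_0'(0^+)=M/\beta$), the $C^2(\overline{B_R(y)})$ regularity when $R<\bar R$, and the precise $C^1$-but-not-$C^2$ behavior at $r=R=\bar R$, where $\partial_s\varphi(\bar R,s_0(\bar R))=0$ so the implicit function theorem fails and a separate limiting argument is required. Your final $C^1$ estimate is fine once the correct branch is in place: $|u'|\le v^\ast$ and the identity $\bigl(\tfrac{M}{b(p-1)}\bigr)^{1/p}=\bigl(\tfrac{\beta}{\bar R p b}\bigr)^{1/(p-1)}$ do give \eqref{unifestimate}. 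As written, however, the proof has a genuine gap at its core.
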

	
	\begin{remark}{\rm
		As $p\to1^+$, the condition \eqref{condR} reduces to the one given in \cite[Proposition 17]{BGI2}.}
	\end{remark}
	
		\begin{proof}
		We first consider the case ${R=\bar R}$.	Slightly abusing notation, we write $u(x) = u(r)$ with $r=|x-y|$. Our candidate for radial solution of \eqref{extremeBallEq} is the solution of the ODE
				\begin{equation}\label{radialExtremeBallEq}
					\left\{
						\begin{array}{cl}
							\beta\frac{u'(r)}{r} + b(-u'(r))^p = -M, & r\in (0,\bar R)\\
							u(\bar R) = 0.
						\end{array}
					\right.
				\end{equation}
			In fact, a solution of \eqref{radialExtremeBallEq} corresponds to a solution of \eqref{extremeBallEq}  if 
				\begin{equation}\label{radialU'Zero}
					\lim_{r\to0^+}u'(r)=0,
				\end{equation}
			and, for all $r\in (0,\bar R)$,
				\begin{equation}\label{radialUDecreas}
					u'(r) < 0
				\end{equation}
			and
				\begin{equation}\label{radialEigenIneq}
					\frac{u'(r)}{r}\geq u''(r).
				\end{equation}
Observe that, for any fixed  $r\in (0,\bar R)$, the equation 
\begin{equation}\label{eq1}
\beta\frac{u'(r)}{r} + b(-u'(r))^p = -M 
\end{equation}
is in a sense an ``algebraic'' equation, given that it involves only the first derivatives of the unknown $u$. 
Therefore let us consider the function
$$
\varphi(r,s)=-\beta\frac sr+bs^p+M\qquad \text{for $r,s>0$}
$$
and note that 
\begin{equation}\label{eq4}
\text{$u(r)$ is solution of \eqref{eq1} if, and only if,\, $\varphi(r,-u'(r))=0$.}
\end{equation}
				%
We claim that  there exists a nonnegative function $s_0=s_0(r)\in C^1([0,\bar R))\cap C([0,\bar R])$ such that 
\begin{enumerate}
  \item[(i)] $\varphi(r,s_0(r))=0$ for $r\in (0,\bar R)$;
	\item[(ii)] $s_0(r)$ is increasing;
	\item[(iii)]  $s_0(0)=0$. 
	
\end{enumerate}
Fix $r\in(0,\bar R)$ and let $s_1(r)=\left(\frac{\beta}{rpb}\right)^{\frac{1}{p-1}}$. Note that $s\mapsto\varphi(r,s)$ is a smooth strictly convex function in $[0,\infty)$, which is decreasing for $s\in\left[0,s_1(r)\right]$ and then increasing for $s\geq s_1(r)$. Moreover $\varphi(r,0)=M>0$ and $\displaystyle\lim_{s\to+\infty}\varphi(r,s)=+\infty$. Exploiting the assumption \eqref{condR}, it is easy to check that
$$
\varphi\left(r,s_1(r)\right)<0.
$$
Hence there exists $s_0=s_0(r)\in(0,s_1(r))$, the \lq\lq first zero\rq\rq\ of $\varphi(r,s)$, such that
$$
\varphi(r,s_0(r))=0\quad\text{and}\quad\varphi(r,s)>0\;\;\;\forall s\in[0,s_0(r)). 
$$
Moreover
\begin{equation}\label{eq2}
\partial_s\varphi(r,s_0(r))=-\frac\beta r+bp(s_0(r))^{p-1}<-\frac\beta r+bp(s_1(r))^{p-1}=0
\end{equation}
 and, by the implicit function theorem, we have that $r\mapsto s_0(r)\in C^1\left((0,\bar R)\right)$. In addition, by \eqref{eq2},
\begin{equation}\label{5eq1}
s'_0(r)=-\frac{\partial_r\varphi(r,s_0(r))}{\partial_s\varphi(r,s_0(r))}=-\frac{\beta s_0(r)}{r^2\partial_s\varphi(r,s_0(r))}>0.
\end{equation}
This shows (i)-(ii).  In order to prove (iii) fix any $\alpha\in(0,1)$. For $r$ positive and small enough we have
\begin{equation*}
\varphi(r,r^\alpha)=-\frac{\beta}{r^{1-\alpha}}+br^{\alpha p}+M<0=\varphi(r,s_0(r)).
\end{equation*}
This implies that $s_0(r)<r^\alpha$, then $s_0(0):=\displaystyle\lim_{r\to0^+}s_0(r)=0$. Let us show that $s_0(r)$ can be extended continuously at $r=\bar R$. In this case $s_0(\bar R)=\left(\frac{\beta}{\bar Rpb}\right)^{\frac{1}{p-1}}$. Note that 
\begin{equation}\label{5eq2}
\varphi(\bar R,s_0(\bar R))=\partial_s\varphi(\bar R,s_0(\bar R))=0,
\end{equation}
so we cannot directly apply the implicit function theorem. Nevertheless, by (ii) there exists $L=\displaystyle \lim_{r\to \bar R^{^-}}s_0(r)$. Taking into account that $s_0(r)< s_1(r)$ for any $r\in(0,\bar R)$, then
$$L\leq\lim_{r\to R^{^-}}s_1(r)=s_0(\bar R)$$
and moreover
$$
0=\lim_{r\to \bar R^{^-}}\varphi(r,s_0(r))=\varphi(\bar R,L).
$$ 
This implies that  $L=s_0(\bar R)$, since by the definition of $s_0(\bar R)$ we have that $\varphi(\bar R,s)>0$ for any $s\in[0,s_0(\bar R))$.\\
It remains to prove that $s'_0(r)$ can be extended continuously at $r=0$. By \eqref{5eq1}
it holds that
$$
\lim_{r\to0^+}s'_0(r)=\lim_{r\to0^+}\frac{M+b(s_0(r))^p}{\beta-bpr(s_0(r))^{p-1}}=\frac{M}{\beta}.
$$ 
This show that $s_0\in C^1\left([0,\bar R)\right)$. The proof of the claim is then complete.

\medskip
\noindent			
For $r\in[0,\bar R]$ let
\begin{equation}\label{u0}
u(r):=\int_r^{\bar R}s_0(t)\,dt.
\end{equation}
The function $u=u(r)$ is well defined since $s_0\in C([0,\bar R])$. Let us prove that it  is in fact a radial solution of \eqref{extremeBallEq}. We proceed to check that  $u$  satisfies  \eqref{radialExtremeBallEq}-\eqref{radialEigenIneq}. 
It is trivial that $u(\bar R)=0$. Moreover, since $-u'(r)=s_0(r)$, by \eqref{eq4} and (i) we infer that $u$ is solution of \eqref{radialExtremeBallEq}. Using (ii)-(iii), then conditions  \eqref{radialUDecreas}-\eqref{radialU'Zero} are respectively satisfied.
To prove \eqref{radialEigenIneq}, let us observe that  by \eqref{radialExtremeBallEq},
\begin{equation}\label{eq5}
					\frac{u'(r)}{r} = -\frac1\beta\left(M+ b(-u'(r))^p\right)
				\end{equation}
and since $u'(r)$ is monotone decreasing  by (ii), then
 $\frac{u'}{r}$ is decreasing as well (since the right hand side of \eqref{eq5} is decreasing). Thus, for $r\in(0,\bar R)$,
				\begin{equation*}
					\left(\frac{u'(r)}{r}\right)' = \frac{u''(r)}{r} - \frac{u'(r)}{r^2} \leq 0
				\end{equation*}
			and therefore $u''(r) - \frac{u'(r)}{r}\leq 0$, which is precisely \eqref{radialEigenIneq}. 	
		
Using the monotonicity of $s_0(r)$ and the definition of $u$, we obtain 
\begin{equation}\label{5eq3}		
\begin{split}
 \left\|u\right\|_\infty&=u(0)\leq \bar Rs_0(\bar R)\\
\left\|u'\right\|_\infty&=-u'(\bar R)= s_0(\bar R)
\end{split}
\end{equation}		
which leads to \eqref{unifestimate} in the case $R=\bar R$.

The case $R<\bar R$ easily follows from the previous one. Indeed, if we denote by $u_{{\bar R}}$ the radial  solution we found in the case $R={\bar R}$, then it is clear that $u(r):=u_{{\bar R}}(r)-u_{{\bar R}}(R)$ is a radial solution of  \eqref{extremeBallEq}. Moreover $u\in C^2(\overline{B_R(y)})$ since $u_{{\bar R}}\in C^2(B_{\bar R}(y))$ and $R<{\bar R}$. Lastly let us observe that, from \eqref{5eq3}, we have 
\begin{equation*}
\begin{split}
 \left\|u\right\|_\infty&\leq\left\|u_{{\bar R}}\right\|_\infty\leq {\bar R}s_0({\bar R})={\bar R}\left(\frac{\beta}{{\bar R}pb}\right)^{\frac{1}{p-1}}\\
\left\|u'\right\|_\infty&=\left\|u'_{{\bar R}}\right\|_\infty= s_0(\bar R)=\left(\frac{\beta}{{\bar R}pb}\right)^{\frac{1}{p-1}}.
\end{split}
\end{equation*}	
This proves \eqref{unifestimate} in the case $R<\bar R$. The proof is then complete.
\end{proof}		

\begin{remark}\label{blowup}
{\rm
The radial solution $u$ of \eqref{extremeBallEq}, defined in \eqref{u0}, is also solution  of 
\begin{equation*}
			\left\{
				\begin{array}{cl}
					\beta\lambda_i(D^2u) + b|Du|^p = -M & \textrm{in } B_R(y)\\
					u = 0 & \textrm{on } \partial B_R(y)
				\end{array}
			\right.
		\end{equation*} 
		for any $i=2,\ldots,N$. This  is a trivial consequence of the inequality \eqref{radialEigenIneq}.\\
Moreover we point out that if $R=\bar R$, the function $u$ cannot be extended to a $C^2(\overline{B_R(y)})$ radial function, since \eqref{5eq1} and \eqref{5eq2} readily imply that $\displaystyle \lim_{r\to {R}^{^-}}u''(r)=-\infty$.
}
\end{remark}

In the next proposition we prove the existence of singular/blow-up solutions of the problem 
\begin{equation}\label{blow-upextremeBallEq}
			\left\{
				\begin{array}{cl}
					\beta\lambda_i(D^2u) + b|Du|^p = -M & \textrm{in } B_{\bar R}\backslash\left\{0\right\}\\
					u = 0 & \textrm{on } \partial B_{\bar R},
				\end{array}
			\right.
		\end{equation}
		where  $i=1,\ldots,N-1$. We shall follow the same argument of Proposition \ref{existOnBallProp} but now considering the \lq\lq second zero\rq\rq\ $s_2=s_2(r)$  of the function  $\varphi(r,s)=-\beta\frac sr+bs^p+M$.
		
		\begin{proposition}\label{blow-up solutions}
		There exists $u\in LSC\left([0,\bar R]\right)\cap C^2\left((0,\bar R)\right)$ radial solution of \eqref{blow-upextremeBallEq}. Moreover, if $p\in(1,2]$,  then $\displaystyle\lim_{r\to0^+}u(r)=+\infty$, while for $p>2$ the function $u$ is bounded.
		\end{proposition}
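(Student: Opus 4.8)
The plan is to repeat the proof of Proposition~\ref{existOnBallProp} with the ``first zero'' $s_0(r)$ of $\varphi(r,s)=-\beta\frac sr+bs^p+M$ replaced by its ``second zero'' $s_2(r)$. Recall that $s\mapsto\varphi(r,s)$ is strictly convex, decreasing on $[0,s_1(r)]$ and increasing on $[s_1(r),\infty)$ with $s_1(r)=\big(\frac{\beta}{rpb}\big)^{\frac1{p-1}}$, that $\varphi(r,0)=M>0$, $\lim_{s\to+\infty}\varphi(r,s)=+\infty$, and that $\varphi(r,s_1(r))<0$ for every $r\in(0,\bar R)$; hence $\varphi(r,\cdot)$ has exactly two positive zeros $s_0(r)<s_1(r)<s_2(r)$. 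First I would record the basic properties of $r\mapsto s_2(r)$ on $(0,\bar R)$. Since $\partial_s\varphi(r,s_2(r))=-\frac\beta r+bp\,s_2(r)^{p-1}>-\frac\beta r+bp\,s_1(r)^{p-1}=0$, the implicit function theorem gives $s_2\in C^1((0,\bar R))$ with $s_2'(r)=-\frac{\beta s_2(r)}{r^2\,\partial_s\varphi(r,s_2(r))}<0$, so $s_2$ is strictly decreasing. From $\beta\frac{s_2(r)}{r}=M+b\,s_2(r)^p$ one reads off $s_1(r)<s_2(r)\le\big(\frac{\beta}{rb}\big)^{\frac1{p-1}}$, in particular $s_2(r)\to+\infty$ as $r\to0^+$. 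At $r=\bar R$ the monotonicity yields a limit $L=\lim_{r\to\bar R^-}s_2(r)\ge s_1(\bar R)$ with $\varphi(\bar R,L)=0$; since $\varphi(\bar R,\cdot)\ge0$ vanishes only at $s_1(\bar R)$, necessarily $L=s_1(\bar R)$, so $s_2$ extends to an element of $C((0,\bar R])$.

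Then I would set $u(r):=\int_r^{\bar R}s_2(t)\,dt$ for $r\in(0,\bar R]$ and $u(0):=\lim_{r\to0^+}u(r)\in(0,+\infty]$; this is well defined because $s_2\in C((0,\bar R])$ and $s_2>0$. One has $u\in C^2((0,\bar R))$, $u(\bar R)=0$, $u'(r)=-s_2(r)<0$, and the identity $\varphi(r,-u'(r))=0$ is precisely the ODE $\beta\frac{u'(r)}{r}+b|u'(r)|^p=-M$ on $(0,\bar R)$. For the radial function $x\mapsto u(|x|)$, the eigenvalues of the Hessian at a point with $|x|=r$ are $u''(r)$ (multiplicity one) and $\frac{u'(r)}{r}$ (multiplicity $N-1$), so it suffices to show $u''(r)\ge\frac{u'(r)}{r}$ in order to obtain $\lambda_1(D^2u)=\dots=\lambda_{N-1}(D^2u)=\frac{u'(r)}{r}$ and hence the validity of \eqref{blow-upextremeBallEq} for every $i=1,\dots,N-1$. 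But the ODE gives $\frac{u'(r)}{r}=-\frac1\beta\big(M+b\,s_2(r)^p\big)$, which is increasing because $s_2$ is decreasing; therefore $\big(\frac{u'}{r}\big)'=\frac{u''}{r}-\frac{u'}{r^2}\ge0$, that is $u''\ge\frac{u'}{r}$. Since $u$ is $C^2$ in $B_{\bar R}\setminus\{0\}$ and satisfies the equation pointwise there, it is a (classical, hence viscosity) solution; $u=0$ on $\partial B_{\bar R}$ because $u(r)\to0$ as $r\to\bar R^-$; and, $u$ being continuous on $(0,\bar R]$ and decreasing, $u(0)=\sup u$, so $u\in LSC([0,\bar R])$.

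For the last assertion I would use the two-sided bound $s_1(r)<s_2(r)\le\big(\frac{\beta}{rb}\big)^{\frac1{p-1}}$, whose two sides are constant multiples of $r^{-1/(p-1)}$. If $p>2$ then $\frac1{p-1}<1$, so $s_2$ is integrable near $0$ and $u(0)=\int_0^{\bar R}s_2<+\infty$, i.e. $u$ is bounded. If $p\in(1,2]$ then $\frac1{p-1}\ge1$, so $\int_0^{\bar R}s_1(t)\,dt=+\infty$ and a fortiori $u(r)=\int_r^{\bar R}s_2(t)\,dt\to+\infty$ as $r\to0^+$.

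The one delicate point — exactly as in Proposition~\ref{existOnBallProp} — is the continuity of $s_2$ at $r=\bar R$, where $\varphi(\bar R,\cdot)$ and $\partial_s\varphi(\bar R,\cdot)$ both vanish at $s_1(\bar R)$, so the implicit function theorem is unavailable and one has to argue through the monotonicity of $s_2$ together with the uniqueness of the zero of $\varphi(\bar R,\cdot)$. Everything else is a routine adaptation of the proof of Proposition~\ref{existOnBallProp}; the only genuinely new computation is the integrability analysis of $s_2$ near the origin, which produces the dichotomy between $p\in(1,2]$ and $p>2$.
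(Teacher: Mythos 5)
Your proof is correct and follows essentially the same route as the paper: the ``second zero'' $s_2(r)$ of $\varphi$, its monotonicity and continuity up to $r=\bar R$, the definition $u(r)=\int_r^{\bar R}s_2$, the inequality $u''\ge u'/r$ via the monotonicity of $u'/r$, and the integrability dichotomy for $s_2\sim r^{-1/(p-1)}$ near the origin. The only cosmetic difference is that you obtain the upper bound $s_2(r)\le(\beta/(rb))^{1/(p-1)}$ directly from the identity $\beta s_2/r=M+bs_2^p$, whereas the paper evaluates $\varphi\bigl(r,p^{1/(p-1)}s_1(r)\bigr)=M>0$; the two bounds coincide.
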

\begin{proof}
For $r\in(0,\bar R]$ let $s_1(r)=\left(\frac{\beta}{rpb}\right)^{\frac{1}{p-1}}$ be the  critical point of $\varphi(r,s)=-\beta\frac sr+bs^p+M$ and let 
\begin{equation}\label{3012eq4}
s_2(r)\geq s_1(r)
\end{equation} be such that $\varphi(r,s_2(r))=0$. It is clear that  
$$
\lim_{r\to0^+}s_2(r)=+\infty.
$$
Moreover, arguing similarly as in the proof of Proposition \ref{existOnBallProp}, we infer that
$$
s'_2(r)=-\frac{\beta s_2(r)}{r^2\partial_s\varphi(r,s_2(r))}<0,
$$
so the map $r\in(0,\bar R]\to s_2(r)$ is monotone decreasing, $s_2(\bar R)=s_1(\bar R)$ and $s_2\in C^1(0,\bar R)\cap C^0((0,\bar R])$. In this way the function defined by
\begin{equation}\label{3012eq3}
u(r)=\int_r^{\bar R}s_2(t)\,dt\qquad\text{for $r\in(0,\bar R]$},
\end{equation}
is a monotone decreasing  solution of \eqref{radialExtremeBallEq} and, differently from \eqref{u0}, it happens that 
\begin{equation}\label{3012eq1}
\lim_{r\to0^+}u'(r)=-\lim_{r\to0^+}s_2(r)=-\infty.
\end{equation}
In particular there is no way to extend smoothly $u(r)$ for $r=0$. Setting $\displaystyle u(0):=\lim_{r\to0^+}u(r)\in(0,\infty]$,   we obtain that 
$$u\in LSC([0,\bar R])\cap C^2\left((0,\bar R)\right).$$ 
In addition, since $u'$ is increasing, using \eqref{eq5} we infer that $\frac{u'}{r}$ is increasing too and then 
$$
\frac{u'(r)}{r}\leq u''(r)\qquad\text{for $r\in(0,\bar R)$}.
$$
Hence $u(|x|)$ is a classical solution of 
\begin{equation}\label{3012eq2}
			\left\{
				\begin{array}{cl}
					\beta\lambda_i(D^2u) + b|Du|^p = -M & \textrm{in } B_{\bar R}\backslash\left\{0\right\}\\
					u = 0 & \textrm{on } \partial B_{\bar R}
				\end{array}
			\right.
		\end{equation}
		for any $i=1,\ldots,N-1$. Moreover, in view of \eqref{3012eq1},  there are no test functions touching $u(|x|)$ from below at $x=0$. Then $u(|x|)$ is also a viscosity  supersolution of \eqref{3012eq2} in the whole ball $B_{\bar R}$.	 The function $u$ blows up in the origin for $p\leq 2$. Indeed, by \eqref{3012eq4}-\eqref{3012eq3}, we have
		$$
		\lim_{r\to0^+}u(r)=\int_0^{\bar R}s_2(t)\,dt\geq\int_0^{\bar R}s_1(t)\,dt=\left(\frac{\beta}{pb}\right)^{\frac{1}{p-1}}\int_0^{\bar R}\frac{dt}{t^{\frac{1}{p-1}}}=+\infty.
		$$
		If instead $p>2$ the function $u$ stays bounded. To show this let us first observe that for $r\in(0,\bar R)$ 
		$$
		\varphi\left(r, p^{\frac{1}{p-1}}s_1(r)\right)=M>0.
		$$
	This implies that $s_2(r)<p^{\frac{1}{p-1}}s_1(r)$ and 
		$$
		\lim_{r\to0^+}u(r)=\int_0^{\bar R}s_2(t)\,dt< p^{\frac{1}{p-1}}\int_0^{\bar R}s_1(t)\,dt=\left(\frac\beta b\right)^{\frac{1}{p-1}}\frac{p-1}{p-2}{\bar R}^{\frac{p-2}{p-1}}.
		$$
		Since $u$ is monotone decreasing and nonnegative, then $$\left\|u\right\|_\infty<\left(\frac\beta b\right)^{\frac{1}{p-1}}\frac{p-1}{p-2}{\bar R}^{\frac{p-2}{p-1}}$$ and the proof is complete.
		\end{proof}

		For the sake of completeness we  report the  explicit expressions of $s_0(r)$ and $s_2(r)$ in the model case: $p=\beta=2$ and  $b=M=\bar R=1$.  It holds that for any $r\in(0,1]$
		$$
		s_0(r)=\frac1r-\sqrt{\frac{1}{r^2}-1}\quad\text{and}\quad s_2(r)=\frac1r+\sqrt{\frac{1}{r^2}-1}.
		$$
		Recalling \eqref{u0} and \eqref{3012eq3},  by  straightforward computations we deduce that the functions
		\begin{equation*}
u_0(r)=\sqrt{1-r^2}-\log r-\frac12\log\frac{1+\sqrt{1-r^2}}{1-\sqrt{1-r^2}}
		\end{equation*}
		and 
		\begin{equation*}
u_2(r)=-\sqrt{1-r^2}-\log r+\frac12\log\frac{1+\sqrt{1-r^2}}{1-\sqrt{1-r^2}}
		\end{equation*}
		 are respectively radial solutions of 
		$$
		\left\{
				\begin{array}{cl}
					2\lambda_i(D^2u) + |Du|^2 = -1 & \textrm{in } B_{1}\\
					u = 0 & \textrm{on } \partial B_{1}
				\end{array}
			\right.
			\quad\text{and}\quad
			\left\{
				\begin{array}{cl}
					2\lambda_j(D^2u) + |Du|^2 = -1 & \textrm{in } B_{1}\backslash\left\{0\right\}\\
					u = 0 & \textrm{on } \partial B_{1}\\
					\displaystyle\lim_{x\to0}u(x)=+\infty
				\end{array}
			\right.
		$$
		for any $i=2,\ldots,N$ and $j=1,\ldots,N-1$.
		
		\begin{remark}
		{\rm Let us point that  the arguments of Propositions \ref{existOnBallProp}-\ref{blow-up solutions} also apply  if $M=0$, i.e. $\bar R=+\infty$ which means that the associated Dirichlet problem is solvable in any ball $B_R(y)$. In this case the zeros of $\varphi(r,s)=-\beta\frac{s}{r}+bs^p$ are explicit:
		$$
		s_0(r)=0\quad\text{and}\quad s_2(r)=\left(\frac{\beta}{br}\right)^{\frac{1}{p-1}}.
		$$
		So the function $u_0(r)=\int_r^Rs_0(t)\,dt$ is nothing more than the trivial solution of  
		\begin{equation*}
			\left\{
				\begin{array}{cl}
					\beta\lambda_N(D^2u) + b|Du|^p = 0& \textrm{in } B_R(y)\\
					u = 0 & \textrm{on } \partial B_R(y).
				\end{array}
			\right.
		\end{equation*}
		Instead, the radial function ($r=|x-y|$)
		$$
		u_2(r)=\int_r^Rs_2(t)\,dt=
		\left\{
		\begin{array}{cl}
		\left(\frac{\beta}{b}\right)^{\frac{1}{p-1}}\frac{p-1}{p-2}\left(R^\frac{p-2}{p-1}-r^\frac{p-2}{p-1}\right) & \text{if $p\neq2$}\\
		\frac{\beta}{b}\log\frac{R}{r} & \text{if $p=2$}
		\end{array}
		\right.
		$$
		is solution of 
		\begin{equation*}
			\left\{
				\begin{array}{cl}
					\beta\lambda_i(D^2u) + b|Du|^p =0& \textrm{in } B_R\backslash\left\{y\right\}\\
					u = 0 & \textrm{on } \partial B_R
				\end{array}
			\right.
		\end{equation*}
		for any $i=1,\dots,N-1$. 		Moreover for any $p\in(1,2]$ it turns out that $u_2$ is in fact solution of 
		\begin{equation*}
			\left\{
				\begin{array}{cl}
					\beta\lambda_i(D^2u) + b|Du|^p =0& \textrm{in } B_R\backslash\left\{y\right\}\\
					u = 0 & \textrm{on } \partial B_R\\
					\displaystyle\lim_{x\to y}u(x)=+\infty.
				\end{array}
			\right.
		\end{equation*}
		Note that, in the case of the Laplacian, the function $u(r)=c\left(r^\frac{p-2}{p-1}-R^\frac{p-2}{p-1}\right)$ is, for a suitable positive constant $c=c(N,p)$, solution of 
		\begin{equation}\label{lapl}
			\left\{
				\begin{array}{cl}
					\Delta u + |Du|^p =0& \textrm{in } B_R\backslash\left\{y\right\}\\
					u = 0 & \textrm{on } \partial B_R\\
					\displaystyle\lim_{x\to y}u(x)=+\infty
				\end{array}
			\right.
			\end{equation}
		only in the range $p\in\left(\frac{N}{N-1},2\right)$ and not for any $p\in(1,2)$. If $p=2$ the function $u(r)=(2-N)\log\frac rR$ is solution of \eqref{lapl} in dimension $N>2$.
		}
		\end{remark}

	
\subsection{Existence and uniqueness in uniformly convex domains}
	
For any continuous and bounded function $f$ in $\Omega$, let us define $\bar R=\bar R(\beta,b,c,p,f)$ by
\begin{equation}\label{RR}
{\bar R}=\frac{\beta(p-1)^{\frac{p-1}{p}}}{p\,b^\frac1p\,\left\|{(f-c)}^{^-}\right\|_\infty^{\frac{p-1}{p}}}.
\end{equation}
Note that $\bar R=+\infty$ if $\left\|{(f-c)}^{-}\right\|_\infty=0$, $\bar R<+\infty$ otherwise.\\
In the following Theorem \ref{exi} and Proposition \ref{CP} we shall assume that $\Omega\subseteq B_{{\bar R}}$ which, in the case $\bar R=+\infty$, simply means that $\Omega$ is a bounded domain.

The main result of this section is 
		\begin{theorem}[\textbf{Existence and uniqueness: superlinear case}]\label{exi}
		Let $\Omega\subset\RN$, $f\in C(\Omega)\cap L^\infty(\Omega)$. Suppose that $\Omega$ is a uniformly convex domain such that 
		\begin{equation}\label{suffcond}
		\Omega=\bigcap_{y\in Y}B_R(y)\qquad\text{for some}\qquad R\leq{\bar R}:=\frac{\beta(p-1)^{\frac{p-1}{p}}}{p\,b^\frac1p\,\left\|{(f-c)}^{^-}\right\|_\infty^{\frac{p-1}{p}}}\,.
		\end{equation}
		If \ref{ellipticity}-\ref{hom}, \ref{H1} and (CC) hold then there exists a unique   $u\in C(\overline\Omega)$ viscosity solution of \eqref{genEq}.   
		\end{theorem}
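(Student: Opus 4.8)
\emph{Strategy.} The plan is to obtain existence via Perron's method and uniqueness via the comparison principle, the decisive point being the construction of a barrier from above vanishing on $\partial\Omega$; this is precisely where Proposition~\ref{existOnBallProp} is used. First I construct a subsolution null on the boundary. Set $k=(\|f\|_\infty+d)/\beta$ and
\[
\underline u(x)=\sup_{y\in Y}\frac{k}{2}\big(|x-y|^2-R^2\big),\qquad x\in\overline\Omega .
\]
Taking $Y$ bounded and closed (which does not affect the intersection), $\underline u$ is a finite, convex, Lipschitz function with $\underline u\le 0$ in $\overline\Omega$ (since $\Omega\subseteq B_R(y)$ for every $y$) and $\underline u=0$ exactly on $\partial\Omega$ (each boundary point of a uniformly convex domain lies on some sphere $\partial B_R(y^\ast)$, while $\underline u<0$ inside). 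As $\underline u$ is a supremum of $C^2$ functions with Hessian $kI$, at any point where it is tested from above by $\phi$ one has $\lambda_1(D^2\phi)\ge k$; then \eqref{deg2} with $X=0$ gives $F(x,D^2\phi)\ge\beta k=\|f\|_\infty+d$, and since $H\ge -d$ we get $F(x,D^2\phi)+H(D\phi)\ge\|f\|_\infty\ge f(x)$. Hence $\underline u$ is a viscosity subsolution of \eqref{genEq} (this is the ``minimal inequality'' mechanism behind \eqref{maxinequ2}).

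\emph{The supersolution --- the key step.} Put $M:=\|(f-c)^-\|_\infty$. If $M=0$, then $H(0)\le c\le f$ and $\overline u\equiv 0$ is a supersolution of \eqref{genEq}. If $M>0$, hypothesis \eqref{suffcond} reads exactly $R\le\bar R(\beta,b,M,p)$, so for each $y\in Y$ Proposition~\ref{existOnBallProp} yields a radial $u_y\in C^2(B_R(y))\cap C^1(\overline{B_R(y)})$ with $\beta\lambda_N(D^2u_y)+b|Du_y|^p=-M$ in $B_R(y)$, $u_y=0$ on $\partial B_R(y)$, and the uniform bound \eqref{unifestimate}. From $u_y'<0$ and \eqref{radialEigenIneq} one has $D^2u_y\le 0$, so $u_y$ is concave and $u_y\ge 0$; hence, by \eqref{deg} with $X=0$ and $H(\xi)\le b|\xi|^p+c$,
\[
F(x,D^2u_y)+H(Du_y)\le \beta\lambda_N(D^2u_y)+b|Du_y|^p+c=-M+c\le f(x)\quad\text{in }B_R(y)\supseteq\Omega,
\]
so each $u_y$ is a classical, hence viscosity, supersolution of \eqref{genEq} in $\Omega$. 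By \eqref{unifestimate} the family $\{u_y\}_{y\in Y}$ is equi-Lipschitz, so $\overline u:=\inf_{y\in Y}u_y\in C(\overline\Omega)$, $\overline u\ge 0$; being an infimum of supersolutions, $\overline u$ is a supersolution of \eqref{genEq}; and $\overline u=0$ on $\partial\Omega$, because for $x\in\partial\Omega$ there is $y^\ast$ with $x\in\partial B_R(y^\ast)$, whence $u_{y^\ast}(x)=0$.

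\emph{Perron and comparison.} Since $\underline u\le 0\le\overline u$, the class $\mathcal S=\{v\in USC(\overline\Omega): v\text{ subsolution of \eqref{genEq} in }\Omega,\ \underline u\le v\le\overline u\}$ contains $\underline u$, so $u:=\sup_{v\in\mathcal S}v$ is well defined. The standard Perron argument shows that $u^\ast$ is a subsolution and $u_\ast$ a supersolution of \eqref{genEq}, with $\underline u\le u_\ast\le u^\ast\le\overline u$; in particular $u_\ast=u^\ast=0$ on $\partial\Omega$. It remains to prove $u^\ast\le u_\ast$. Since (CC) is phrased for a \emph{strict} supersolution, one first upgrades it to the full comparison inequality by approximating $u_\ast$ from above by strict supersolutions: fixing $y_0\in Y$ and $\varepsilon>0$, if $\phi$ touches $u_\ast+\varepsilon u_{y_0}$ from below at $x_0$, then $\phi-\varepsilon u_{y_0}$ touches $u_\ast$ from below there, and \eqref{deg2} applied with $Y=-\varepsilon D^2u_{y_0}(x_0)\ge 0$ produces the strictly positive gain $-\varepsilon\beta\lambda_N(D^2u_{y_0}(x_0))=\varepsilon\big(M+b|Du_{y_0}(x_0)|^p\big)$; combined with the superlinearity in \ref{H1} and the uniform $C^1$-bound \eqref{unifestimate}, this makes $u_\ast+\varepsilon u_{y_0}$ a strict supersolution, whence $u^\ast\le u_\ast+\varepsilon u_{y_0}$ by (CC), and letting $\varepsilon\to 0$ gives $u^\ast\le u_\ast$. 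Thus $u:=u^\ast=u_\ast\in C(\overline\Omega)$ is a viscosity solution of \eqref{genEq} with $u=0$ on $\partial\Omega$. Uniqueness is identical: if $u_1,u_2\in C(\overline\Omega)$ both solve \eqref{genEq}, the same perturbation makes $u_2+\varepsilon u_{y_0}$ a strict supersolution, so $u_1\le u_2+\varepsilon u_{y_0}$ by (CC) and, symmetrically, $u_2\le u_1+\varepsilon u_{y_0}$; letting $\varepsilon\to 0$ gives $u_1=u_2$. (This comparison step, reducing (CC) to full comparison, is exactly Proposition~\ref{CP}, and the borderline case $R=\bar R$ is the one requiring a limiting argument.)

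\emph{Main obstacle.} The delicate part is not the Perron scheme but the construction of the supersolution: a bounded uniformly convex domain need not be of class $C^2$, so no smooth global barrier is available directly; the device of taking the \emph{infimum over the defining family of balls} of the radial barriers of Proposition~\ref{existOnBallProp} is what produces a supersolution that is concave on each piece, globally continuous, and identically zero on $\partial\Omega$. The existence of those radial barriers is in turn where the size condition $R\le\bar R$ is essential and essentially sharp (cf.\ Remark~\ref{blowup}). A secondary technical point is the passage from (CC) to the inequality $u^\ast\le u_\ast$, i.e.\ verifying that the perturbed functions are genuine strict supersolutions, which relies on the superlinearity $p>1$ and the homogeneity \ref{hom}.
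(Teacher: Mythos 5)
Your existence part follows the paper's route almost verbatim: the supersolution is the infimum over $y\in Y$ of the radial barriers of Proposition~\ref{existOnBallProp} with $M=\left\|(f-c)^-\right\|_\infty$, made uniformly Lipschitz by \eqref{unifestimate} and vanishing on $\partial\Omega$, and the subsolution is a supremum of paraboloids; this is exactly the paper's construction via the extremal inequalities \eqref{maxinequ}--\eqref{maxinequ2}. The genuine problem is your comparison step. You replace the paper's Proposition~\ref{CP} by an \emph{additive} perturbation $u_\ast+\varepsilon u_{y_0}$ and claim it is a strict supersolution because the second-order term gains $\varepsilon\bigl(M+b|Du_{y_0}(x_0)|^p\bigr)$. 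But you must also estimate $H\bigl(D\psi(x_0)+\varepsilon Du_{y_0}(x_0)\bigr)-H\bigl(D\psi(x_0)\bigr)$, where $\psi$ is the test function for $u_\ast$, and nothing controls $|D\psi(x_0)|$: in Perron's method $u_\ast$ is merely semicontinuous, with no gradient bound. Assumption \ref{H1} only controls $H$ along \emph{convex combinations} of gradients (inequality \eqref{gradGrowth}); it gives no bound on additive increments. Indeed for the model $H(\xi)=b|\xi|^p$ with $p>1$ one has $H(\eta+\varepsilon\zeta)-H(\eta)\sim bp\,\varepsilon\,|\eta|^{p-1}|\zeta|$, which is unbounded in $\eta$ and cannot be absorbed by the fixed gain $\varepsilon\bigl(M+b|Du_{y_0}|^p\bigr)$; the uniform $C^1$ bound \eqref{unifestimate} bounds $|Du_{y_0}|$, not $|D\psi|$. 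Moreover, when $\left\|(f-c)^-\right\|_\infty=0$ the barrier $u_{y_0}$ is identically zero, so your perturbation produces no strictness at all, and your appeal to the homogeneity \ref{hom} never actually enters the computation.

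The paper's device is different and is exactly designed to avoid this: in Proposition~\ref{CP} one perturbs \emph{multiplicatively}, setting $v_\sigma=\sigma v+(1-\sigma)\varphi$ with $\varphi$ a radial concave solution of $\beta\lambda_N(D^2\varphi)+b|D\varphi|^p=-\bigl(\left\|(f-c)^-\right\|_\infty+\varepsilon\bigr)$ on a slightly smaller ball $B_{\bar R-\delta}(y)$ (the extra $\varepsilon$, admissible only after shrinking the radius, is what produces strictness, and $M=\varepsilon$ handles the case $\bar R=+\infty$). If $\psi$ touches $v_\sigma$ from below, then $\frac1\sigma\bigl(\psi-(1-\sigma)\varphi\bigr)$ tests $v$, and the $1$-homogeneity \ref{hom} together with \eqref{deg} handles the second-order term, while \eqref{gradGrowth} handles precisely the convex combination $\sigma\eta+(1-\sigma)D\varphi$ of gradients, yielding $F(x,D^2v_\sigma)+H(Dv_\sigma)\le f(x)-(1-\sigma)\varepsilon$ with no a priori bound on $\eta$ needed; then (CC) applies and one lets $\sigma\to1^-$, $\delta\to0^+$. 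To repair your argument you should either reproduce this convex-combination scheme (which is where \ref{hom} is truly used) or otherwise justify, from the stated hypotheses alone, a comparison principle between non-strict sub- and supersolutions; as written, the step ``$u_\ast+\varepsilon u_{y_0}$ is a strict supersolution'' is false in general under \ref{H1}.
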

In order to prove Theorem \ref{exi}, we shall need a comparison principle that is given below, and a proof that relies on the results obtained in the radial setting. But before proceeding with the proof we shall make the following
\begin{remark}\label{por}\rm{ Let us recall that
 Porretta in \cite{PorLin} proved the following dichotomy:  in the case $1<p\leq 2$,  if there exists a solution $u$ of 
\begin{equation}\label{delt0}
			\left\{
				\begin{array}{cl}
					\Delta u + |Du|^p = f(x) & \textrm{in }\Omega\\
					u = 0 & \textrm{on }\partial\Omega,
				\end{array}
			\right.
		\end{equation}
then, for $\lambda>0$, the solutions $u_\lambda$ of
\begin{equation}\label{deltlam}
			\left\{
				\begin{array}{cl}
					\Delta u + |Du|^p -\lambda u= f(x) & \textrm{in }\Omega\\
					u = 0 & \textrm{on }\partial\Omega
				\end{array}
			\right.
		\end{equation}	
converge to  $u$ as $\lambda\rightarrow 0$.
If, instead there are no solutions of \eqref{delt0}, then, for $\lambda\rightarrow 0$ , $u_\lambda\rightarrow+\infty$ and $-\lambda u_\lambda$ converges to the ergodic constant $c_o$, i.e. the unique value such that the following problem
\begin{equation}\label{delterg}
			\left\{
				\begin{array}{cl}
					\Delta v + |Dv|^p +c_o= f(x) & \textrm{in }\Omega\\
					v = -\infty & \textrm{on }\partial\Omega
				\end{array}
			\right.
		\end{equation}	
has  a solution. The existence of $c_o$ goes back to the acclaimed work of Lasry and 
Lions \cite{LL}.
Analogous results have been obtained for nonlinear operators both in divergence form and fully nonlinear, see \cite{LP} and \cite{BDL}.
The relation of $c_o$ to the large-time behavior of solutions of the associated evolution problem can also be used to determine the existence or nonexistence of bounded solutions of \eqref{delt0} in the case $p>2$, see \cite{tabet2010large}.
The question of whether the ergodic constant exists and if this dichotomy is also valid in our generality is far from obvious since any compactness result is very hard to 
establish for such degenerate operators.}
\end{remark}
We now proceed with the comparison principle.
\begin{proposition}[\textbf{Comparison principle: superlinear case}]\label{CP}
	Let $\Omega$ be a bounded domain and let $f\in C(\Omega)\cap L^\infty(\Omega)$ be such that $\Omega\subseteq B_{{\bar R}}(y)$ for some $y\in\RN$. Suppose that \ref{ellipticity}-\ref{hom}, \ref{H1} and (CC) hold.  
	 If $u\in USC(\overline{\Omega})$, $v\in LSC(\overline{\Omega})$ are respectively  sub and supersolution of \eqref{genEq}, then $u\leq v$ in $\overline{\Omega}$.
	\end{proposition}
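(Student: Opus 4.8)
The plan is to reduce the comparison between a subsolution $u$ and a supersolution $v$ to the strict comparison hypothesis (CC) by perturbing $v$ into a \emph{strict} supersolution that still dominates $u$ on $\partial\Omega$ and is still a supersolution in $\Omega$; the whole difficulty is that the perturbation must lower the equation strictly without destroying the boundary condition, and since $F$ is only degenerate elliptic we cannot use the classical trick of adding a small uniformly elliptic multiple of a quadratic. Instead, the natural device here is the radial supersolution produced in Proposition \ref{existOnBallProp}. Concretely, set $M:=\|(f-c)^-\|_\infty+1>0$, so that by \eqref{condR} (with this $M$) there is some $R<\bar R$ with $\Omega\subseteq B_R(y)$ — note $\Omega\subseteq B_{\bar R}(y)$ by hypothesis and we have strict inequality because we bumped $M$ up by $1$ — and let $w\in C^2(\overline{B_R(y)})$ be the radial solution of \eqref{extremeBallEq} on $B_R(y)$ furnished by Proposition \ref{existOnBallProp}. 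By Remark \ref{blowup}, $w$ satisfies $\beta\lambda_N(D^2w)+b|Dw|^p=-M$ classically, and since $w$ is concave (recall \eqref{radialEigenIneq} gives $u''\le u'/r$, forcing $D^2w\le 0$), using \ref{ellipticity} in the form \eqref{deg} together with $F(x,0)=0$ and the lower growth bound $H(\xi)\le b|\xi|^p+c$ coming from \ref{H1} with $\sigma=0$, one checks that $w$ is a (classical, hence viscosity) supersolution of the model inequality \eqref{maxinequ}, i.e. of $\beta\lambda_N(D^2 w)+b|Dw|^p \le f(x)-c$; more precisely $F(x,D^2w)+H(Dw)\le \beta\lambda_N(D^2w)+b|Dw|^p+c = -M+c \le f(x)-1 < f(x)$ in $\Omega$, so $w$ is actually a \emph{strict} supersolution of \eqref{genEq} in $\Omega$. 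Also $w\ge 0$ on $\overline\Omega$ and $w>0$ in $\Omega\subset B_R(y)$ since $w$ vanishes only on $\partial B_R(y)$.

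With this barrier in hand, for $\theta\in(0,1)$ I consider $v_\theta:=(1-\theta)v+\theta w$. The claim is that $v_\theta$ is a strict supersolution of \eqref{genEq}. This is where assumptions \ref{hom} and the convexity-type condition \eqref{gradGrowth} in \ref{H1} enter, and it is the technical heart of the argument: at a point where a smooth $\phi$ touches $v_\theta$ from below, one writes $\phi = (1-\theta)\psi + \theta w$ with $\psi$ touching $v$ from below (this decomposition works in the viscosity sense via the usual doubling/sum argument, but morally it is the above), and then estimates
\[
F(x,D^2\phi)+H(D\phi) \le (1-\theta)\big(F(x,D^2\psi)+H(D\psi)\big)+\theta\big(F(x,D^2w)+b|Dw|^p+c\big)
\]
using $1$-homogeneity and subadditivity of $X\mapsto F(x,X)$ along $Y\le 0$ directions for the $F$-part (here one needs $D^2w\le 0$ so that \eqref{deg} applies to the increment $\theta D^2 w$), and using \eqref{gradGrowth} — rearranged as $H((1-\theta)\eta+\theta\xi)\le (1-\theta)H(\eta)+\theta(b|\xi|^p+c)$ with $\eta=D\psi$, $\xi=Dw$ — for the $H$-part. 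The right side is then $\le (1-\theta)f(x)+\theta(-M+c) = f(x) - \theta(M - c + f(x)) \le f(x)-\theta < f(x)$ since $M-c+f(x)\ge M - \|(f-c)^-\|_\infty = 1$. Hence $v_\theta$ is a strict supersolution of \eqref{genEq} in $\Omega$.

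Finally I apply (CC) to the pair $(u, v_\theta)$: $u\in USC(\overline\Omega)$ is a subsolution, $v_\theta\in LSC(\overline\Omega)$ is a strict supersolution, and on $\partial\Omega$ we have $u\le 0$ and $v_\theta = (1-\theta)v+\theta w \ge (1-\theta)\cdot 0 + \theta\cdot 0 = 0$ (using $u\le 0$, $v\ge 0$, $w\ge0$ on the boundary, which hold because $u\le v$ there is \emph{not} yet available — rather, $u=v=0$ on $\partial\Omega$ in the usual formulation, or more generally one runs the comparison on $\overline\Omega$ with the boundary inequality built into the sub/supersolution notion), so that (CC) yields $u\le v_\theta$ on $\overline\Omega$. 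Letting $\theta\to 0^+$ and using that $w$ is bounded on $\overline\Omega$ gives $v_\theta\to v$ pointwise, whence $u\le v$ in $\overline\Omega$. The main obstacle, as noted, is justifying rigorously in the viscosity framework that the convex combination $v_\theta$ is a strict supersolution: because $F$ is genuinely degenerate one cannot perturb by a strongly elliptic term, so one must exploit the \emph{specific} concave barrier $w$ together with the homogeneity \ref{hom} and the structural inequality \eqref{gradGrowth}; the sum-of-functions manipulation at points of contact with test functions should be handled by the standard Crandall–Ishii doubling-of-variables argument rather than a naive pointwise decomposition.
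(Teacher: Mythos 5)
Your overall strategy is the paper's: mix the supersolution with a concave radial barrier from Proposition \ref{existOnBallProp} to manufacture a strict supersolution (using \ref{hom}, \eqref{deg} along the nonpositive increment, and \eqref{gradGrowth}), invoke (CC), and pass to the limit in the mixing parameter. But the very first step has a genuine gap. You set $M=\left\|(f-c)^-\right\|_\infty+1$ and claim there is $R<\bar R$ with $\Omega\subseteq B_R(y)$ and $R$ admissible in \eqref{condR} for this $M$. Increasing $M$ \emph{shrinks} the admissible radius: the threshold for $M$ is $\beta(p-1)^{\frac{p-1}{p}}/(p\,b^{1/p}M^{\frac{p-1}{p}})$, which is strictly smaller than $\bar R$ as defined in \eqref{RR}, while the hypothesis only gives $\Omega\subseteq B_{\bar R}(y)$ — and $\Omega$ may well be $B_{\bar R}(y)$ itself. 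So no ball containing $\Omega$ satisfies \eqref{condR} for your bumped-up $M$, and your global barrier $w$ need not exist. This is not a removable technicality: the near-sharpness of the radius condition (Remark \ref{blowup} and the discussion in the introduction) is precisely the statement that a concave strict supersolution with right-hand side $-M<-\left\|(f-c)^-\right\|_\infty$ on all of $B_{\bar R}(y)$ is in general unavailable. The same defect appears in the case $\bar R=+\infty$: you fixed the bump at $1$, whereas for a large bounded $\Omega$ one must take the perturbation $M=\varepsilon$ small, depending on the radius of a ball containing $\overline\Omega$.

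The paper's proof circumvents exactly this obstruction by localizing: arguing by contradiction at an interior point $z_0$, it builds the barrier $\varphi$ only on the smaller ball $B_{\bar R-\delta}(y)$ (with right-hand side $-(\left\|(f-c)^-\right\|_\infty+\varepsilon)$, $\varepsilon$ chosen after $\delta$), forms $v_\sigma=\sigma v+(1-\sigma)\varphi$ on $\Omega\cap B_{\bar R-\delta}(y)$, applies (CC) there to get $u(z_0)-v_\sigma(z_0)\leq\sup_{\partial(\Omega\cap B_{\bar R-\delta}(y))}(u-v_\sigma)$, and then lets $\sigma\to1^-$ and $\delta\to0^+$; the extra piece of boundary inside $\Omega$ is absorbed in the limit. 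You would need this (or an equivalent) localization to complete your argument. Two smaller points: since the barrier is $C^2$, your decomposition of test functions is already rigorous pointwise ($\psi=\frac{1}{1-\theta}(\phi-\theta w)$ is an admissible test function for $v$), so no doubling-of-variables is needed; and the intermediate inequality $F(x,D^2\phi)\leq(1-\theta)F(x,D^2\psi)+\theta F(x,D^2w)$ is not what \ref{ellipticity}--\ref{hom} yield — they give $(1-\theta)F(x,D^2\psi)+\theta\beta\lambda_N(D^2w)$, which is what you should use together with the equation satisfied by $w$.
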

		\begin{remark}{\rm
		We stress that in Proposition \ref{CP}  we do not require convexity assumptions on $\Omega$.}
		\end{remark}
			
		\begin{proof}[Proof of Proposition \ref{CP}]
		We assume by contradiction that there exists $z_0\in\Omega$ such that $u(z_0)>v(z_0)$. Let us first treat the case ${\bar R}<+\infty$.
		Fix $\delta>0$ such that $\delta<{\rm dist}(z_0,\partial B_{{\bar R}}(y))$. In this way $z_0\in B_{{\bar R}-\delta}(y)$ and for any $\varepsilon>0$ sufficiently small we have that 
		\begin{equation}\label{2eq1}
		\bar R-\delta< \frac{\beta(p-1)^{\frac{p-1}{p}}}{p\,b^\frac1p\,\left(\left\|{(f-c)}^{^-}\right\|_\infty+\varepsilon\right)^{\frac{p-1}{p}}}\,.
		\end{equation}
		By \eqref{2eq1} and using Proposition \ref{existOnBallProp}, there exists $\varphi\in C^2(\overline{B_{\bar R-\delta}(y)})$ solution of 
		\begin{equation}\label{3eq2}
		\beta\lambda_N(D^2\varphi)+b|D\varphi|^p=-\left(\left\|{(f-c)}^{-}\right\|_\infty+\varepsilon\right)\qquad\text{in $B_{\bar R-\delta}(y)$}.
		\end{equation}
		For any $\sigma\in(0,1)$ let us consider the convex combination 
		$$v_\sigma(x)=\sigma v(x)+(1-\sigma)\varphi(x),\qquad x\in\Omega\cap B_{{\bar R}-\delta}(y).$$
		We claim that $v_\sigma$ satisfies, in the viscosity sense, the inequality
		\begin{equation}\label{3eq3}
		F(x,D^2v_\sigma)+H(Dv_\sigma)\leq f(x)-(1-\sigma)\varepsilon\qquad\text{in $\Omega\cap B_{\bar R-\delta}(y)$}.
		\end{equation}
		For this let $x_0\in\Omega\cap B_{\bar R-\delta}(y)$ and let $\psi\in C^2(\Omega\cap B_{\bar R-\delta}(y))$ such that 
		$v_\sigma -\psi$ has a local minimum at $x_0$. Hence  $v-\frac1\sigma(\psi-(1-\sigma)\varphi)$ has a local minimum at $x_0$. Since $v$ is supersolution of \eqref{genEq}, then setting
		$$
		\eta=\frac1\sigma D(\psi-(1-\sigma)\varphi)(x_0),\quad X=\frac1\sigma D^2(\psi-(1-\sigma)\varphi)(x_0)
		$$
		we have 
		$$
		F\left(x_0,X\right)+H\left(\eta\right)\leq f(x_0).
		$$
		Then, using the assumptions \ref{ellipticity}-\ref{hom}, the equation \eqref{3eq2} and the fact that $D^2\varphi(x)\leq0$ for any $x\in B_{\bar R-\delta}(y)$, we have
		\begin{equation*}
		\begin{split}
		F(x_0,D^2\psi(x_0))+H(D\psi(x_0))&=\sigma F\left(x_0,X+\frac{1-\sigma}{\sigma}D^2\varphi(x_0)\right)+\sigma H\left(\eta\right)\\
		&\quad+H(\sigma\eta+(1-\sigma)D\varphi(x_0))-\sigma H\left(\eta\right)\\
		&\leq \sigma \left(F\left(x_0,X\right)+H(\eta)\right)\\
		&\quad+(1-\sigma)\left(\beta\lambda_N(D^2\varphi(x_0))+b|D\varphi(x_0)|^p+c\right)\\
		&\leq\sigma f(x_0)+(1-\sigma)\left(c-(\left\|{(f-c)}^{-}\right\|_\infty+\varepsilon)\right)\\
		&\leq f(x_0)-(1-\sigma)\varepsilon.
		\end{split}
		\end{equation*}
		This proves the claim. Now, since by \eqref{3eq3} the function $v_\sigma$ satisfies a strict inequality, by the assumption (CC) we infer that
		$$
		u(z_0)-v_\sigma(z_0)\leq\sup_{\partial\left(\Omega\cap B_{\bar R-\delta}(y)\right)}(u-v_\sigma).
		$$
		Passing to the limit as $\sigma\to1^-$ we have 
		$$
		u(z_0)-v(z_0)\leq\sup_{\partial\left(\Omega\cap B_{\bar R-\delta}(y)\right)}(u-v).
		$$
		The above inequality holds for any positive $\delta<{\rm dist}(z_0,\partial B_{\bar R}(y))$. Thus 
		$$
		u(z_0)-v(z_0)\leq\sup_{\partial\Omega}(u-v)\leq0,
		$$
		contradiction.
		
		The proof in the case $\bar R=+\infty$, i.e. $f(x)\geq c$ for any $x\in\Omega$, is in fact a little bit easier since we don't have to consider the parameter $\delta$.	Take $R>0$  such that $\overline\Omega\subset B_R$. For $\varepsilon>0$ sufficiently small,  the condition \eqref{condR} is then satisfied with $M=\varepsilon$. Hence there exists a radial solution of 
		\begin{equation*}\label{2eq2}
		\beta\lambda_N(D^2\varphi)+b|D\varphi|^p=-\varepsilon\qquad\text{in $B_R$}
		\end{equation*}
		which is concave in $B_R$.
		Then, as above, $$
		u(z_0)-v_\sigma(z_0)\leq\sup_{\partial\Omega}(u-v_\sigma)
		$$
		and again we obtain a contradiction sending $\sigma\to1^-$.
		\end{proof}
		
		\begin{remark}\label{CPMASuper}
		\rm In the case of the Monge-Ampère equation
		\begin{equation}\label{44eq1}
		\left(\det(D^2u)\right)^\frac1N = f(x)+b|Du|^p \quad \text{in $\Omega$},
		\end{equation}
		with $f\geq0$ and $p\geq1$, the validity of the comparison principle is a consequence of  \cite[Theorem V.2]{IL},  which in fact applies to more general right hand side $f(x,u,Du)$, locally Lipschitz in the gradient variable.\\ 
		On the other hand the proof of Proposition \ref{CP}, in particular the perturbation argument to produce a strict supersolution, can be easily adapted to the equation \eqref{44eq1}, in order to obtain comparison principle within the class of  convex sub and supersolutions. For the sake of completeness we briefly describe this aspect.
				Let $u$ and $v$ be respectively  subsolution and supersolution of \eqref{44eq1} such that $u\leq v$ on $\partial\Omega$ and $u(z_0)>v(z_0)$ for some $z_0\in\Omega$. First assume $\bar R<\infty$. Fix $y\in Y$,  $\delta>0$ such that $z_0\in B_{\bar R-\delta}(y)$. For any $\varepsilon>0$ such that 
					\begin{equation*}
		\bar R-\delta< \frac{\beta(p-1)^{\frac{p-1}{p}}}{p\,b^\frac1p\,\left(\left\|f\right\|_\infty+\varepsilon\right)^{\frac{p-1}{p}}}
		\end{equation*}
		there exists, by Proposition \ref{existOnBallProp}, a convex solution $\varphi\in C^2(\overline{B_{\bar R-\delta}(y)})$		of 
			$$
			\lambda_1(D^2\varphi)=\left\|f\right\|_\infty+\varepsilon+b|D\varphi|^p\quad\text{in $B_{\bar R-\delta}(y)$}.
			$$
			Hence by \eqref{deg2} we infer that the function $u_\sigma(x)=\sigma u(x)+(1-\sigma)\varphi(x)$, where $\sigma\in(0,1)$ and $x\in\Omega\cap B_{\bar R-\delta}(y)$, satisfies in the viscosity sense the inequalities
			\begin{equation*}
			\begin{split}
			\left(\det(D^2u_\sigma(x))\right)^\frac1N&\geq\sigma\left(\det(D^2u(x))\right)^\frac1N+(1-\sigma)\lambda_1(D^2\varphi)\\
			&\geq\sigma\left(f(x)+b|Du(x)|^p\right)+(1-\sigma)\left(\left\|f\right\|_\infty+\varepsilon+b|D\varphi(x)|^p\right)\\
			&\geq b|Du_\sigma(x)|^p+f(x)+(1-\sigma)\varepsilon.
			\end{split}
			\end{equation*}
	Since $u_\sigma$ satisfies a strict inequality, we can use \cite[Chapters 3 and 5.C]{CIL} (note  that the matrices $X,Y\in{\mathbb S}^N$  in (3.9)-(3.10) of \cite{CIL} satisfy $0\leq X\leq Y$ since $u$ is convex) to reach a contradiction,  letting first $\sigma\to1^-$ and then $\delta\to0^+$.\\
	If $\bar R=+\infty$, i.e. $f\equiv0$, then we consider $R>0$ such that $\overline\Omega\subset B_R$. For $\varepsilon$ small enough there exists, by Proposition \ref{existOnBallProp}, a convex solution $\varphi\in C^2(\overline{B_R})$ 
	of $$
			\lambda_1(D^2\varphi)=\varepsilon+b|D\varphi|^p\quad\text{in $B_R$}
			$$
			and as above we get a contradiction in the limit $\sigma\to1^-$.  
		\end{remark}
		
We can start the 		
		\begin{proof}[Proof of Theorem \ref{exi}]
		By Proposition \ref{CP} we are in position to use the Perron's method. By \eqref{maxinequ} it is  sufficient to construct a concave supersolution  $\overline{u}\in C(\overline\Omega)$ of
		\begin{equation}\label{eqsuper}
		\beta\lambda_N(D^2u)+b|Du|^p= f(x)-c\qquad\text{in $\Omega$}
		\end{equation}		
and convex subsolution $\underline{u}\in C(\overline\Omega)$ of 
\begin{equation}\label{eqsub}
	\beta\lambda_1(D^2u)= f(x)+d\qquad\text{in $\Omega$}.
		\end{equation}
		such that $\overline{u}=\underline{u}=0$ on $\partial\Omega$.
		
			We first consider the supersolution case. If $\left\|(f-c)^-\right\|_\infty=0$ just take $\overline{u}=0$. Suppose that $\left\|(f-c)^-\right\|_\infty\neq0$. By proposition \ref{existOnBallProp}, with $M=\left\|(f-c)^-\right\|_\infty$, for any $y\in Y$ there exists $v_y(x)=v(|x-y|)$ a concave solution of 
		\begin{equation}\label{eqsuper1}
		\left\{
				\begin{array}{cl}
					\beta\lambda_N(D^2v) + b|Dv|^p \leq f(x)-c & \textrm{in } B_R(y)\\
					v = 0 & \textrm{on } \partial B_R(y).
				\end{array}
			\right.
			\end{equation}
	In view of the uniform estimates \eqref{unifestimate}, the family $\left\{v_y\right\}_{y\in Y}$ is bounded in $C^1(\overline\Omega)$. Hence we infer that  $$\overline{u}(x):=\inf_{y\in Y}v_y(x)$$  is a nonnegative Lipschitz continuous function in $\overline\Omega$ which is a concave supersolution of \eqref{eqsuper}.	Moreover for any $x\in\partial\Omega$, there exists $y_x\in Y$  such that $x\in\partial B_R(y_x)$. Thus $\overline{u}(x)\leq v_{y_x}(x)=0$, leading to the Dirichlet condition $\overline{u}=0$ on $\partial \Omega$. In addition, since $\overline{u}$ is a concave supersolution of \eqref{eqsuper} and $\left\|(f-c)^-\right\|_\infty\neq0$, we infer that $\overline{u}$ is in fact positive in $\Omega$. 
			
	The construction of a subsolution to \eqref{eqsub} is more direct than the supersolution since there are no gradient terms in the equation \eqref{eqsub}. For instance
	the function
	$$
	u_y(x)=\frac{\left\|(f+d)^+\right\|_\infty}{2\beta}\left(|x-y|^2-R^2\right)
	$$
	is, for any $y\in Y$, a negative convex solution of
	\begin{equation*}
		\left\{
				\begin{array}{cl}
					\beta\lambda_1(D^2u) \geq f(x)+d & \textrm{in } B_R(y)\\
					u = 0 & \textrm{on } \partial B_R(y).
				\end{array}
			\right.
			\end{equation*}
	 Since $\left\{u_y\right\}_{y\in Y}$ is bounded in $C^1(\overline\Omega)$, then  $$\underline{u}(x):=\sup_{y\in Y}u_y(x)$$ is in turn a  convex subsolution of \eqref{eqsub}, continuous in $\overline\Omega$  such that $\underline{u}=0$ on $\partial\Omega$.		
		\end{proof}

		We end this section by showing that the above arguments can be slightly modified in order to obtain the existence and uniqueness of convex solutions of 
		
		\begin{equation}\label{pbmongeampere}
			\left\{
				\begin{array}{cl}
					\left(\det(D^2u)\right)^\frac1N = f(x)+b|Du|^p & \textrm{in }\Omega\\
					u = 0 & \textrm{on }\partial\Omega.
				\end{array}
			\right.
		\end{equation}
		Since  equations of the form $$\det (D^2u)=g(x,Du)\quad\text{in $\Omega$}$$ are elliptic only when $D^2u$ is nonnegative,  it is therefore  natural to confine the attention to convex solutions $u$ and positive right hand side $g$. In view of this we shall assume that $f\geq0$ in $\Omega$ and that, differently from  \eqref{mainexample},  the gradient term is placed on the right hand side of the equation with a positive sign in front of it.
		
		\begin{theorem}\label{MongeA1}
		Let $\Omega\subset\R^N$, $f\in C(\Omega)\cap L^\infty(\Omega)$ be nonnegative in $\Omega$. If $\Omega$ is a uniformly convex domain such that 
		\begin{equation}\label{suffcond2}
		\Omega=\bigcap_{y\in Y}B_R(y)\qquad\text{for some}\qquad R\leq{\bar R}:=\frac{(p-1)^{\frac{p-1}{p}}}{p\,b^\frac1p\,\left\|f\right\|_\infty^{\frac{p-1}{p}}}\,,
		\end{equation}
		then there exists a unique $u\in C(\overline\Omega)$ viscosity solution of \eqref{pbmongeampere}.
		\end{theorem}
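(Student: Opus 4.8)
The plan is to adapt the proof of Theorem~\ref{exi}, with the roles of the extremal inequalities reversed to account for the fact that the Monge-Amp\`ere operator is elliptic only on the positive cone and that the gradient term now sits on the right-hand side with a positive sign. First I would establish the comparison principle: this is exactly the content of Remark~\ref{CPMASuper}, which gives comparison within the class of convex sub- and supersolutions of \eqref{pbmongeampere} under the hypothesis $\Omega\subseteq B_{\bar R}(y)$ with $\bar R$ as in \eqref{suffcond2}. Since \eqref{suffcond2} forces $R\le\bar R$ and hence $\Omega\subseteq B_{\bar R}(y)$ for every $y\in Y$, this applies.

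With comparison in hand, I would run Perron's method, but restricted to the class of convex functions, so that the operator $u\mapsto(\det D^2u)^{1/N}$ is genuinely elliptic and the usual sup/inf stability arguments go through. The key is to produce a convex subsolution and a convex supersolution, both vanishing on $\partial\Omega$. For the \textbf{supersolution}: one wants a convex $\overline u$ with $(\det D^2\overline u)^{1/N}\le f(x)+b|D\overline u|^p$. Since $\overline u$ convex gives $(\det D^2\overline u)^{1/N}\ge\lambda_1(D^2\overline u)$ is false in the direction we need --- rather, for the supersolution I would simply take $\overline u\equiv 0$, which is convex and satisfies $(\det D^2\cdot 0)^{1/N}=0\le f(x)+0$ since $f\ge0$; it also vanishes on $\partial\Omega$. (This is the mirror image of taking $\overline u=0$ when $\|(f-c)^-\|_\infty=0$ in Theorem~\ref{exi}.)

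For the \textbf{subsolution}: one needs a convex $\underline u\le 0$ on $\partial\Omega$, indeed $\underline u=0$ on $\partial\Omega$, with $(\det D^2\underline u)^{1/N}\ge f(x)+b|D\underline u|^p$. By Proposition~\ref{existOnBallProp} applied with $\beta=1$, $M=\|f\|_\infty$ --- noting that \eqref{condR} with these values is precisely $R\le\bar R$ from \eqref{suffcond2} --- for each $y\in Y$ there is a radial $\varphi_y\in C^2(\overline{B_R(y)})$, concave, solving $\lambda_N(D^2\varphi_y)+b|D\varphi_y|^p=-\|f\|_\infty$ on $B_R(y)$ with $\varphi_y=0$ on $\partial B_R(y)$; then $u_y:=-\varphi_y$ is \emph{convex}, $u_y\le 0$, $u_y=0$ on $\partial B_R(y)$, and by \eqref{eigenvalueinequality} together with the matrix identity for $(\det X)^{1/N}$ used in Section~\ref{examples F} one checks $\lambda_1(D^2u_y)\ge\|f\|_\infty+b|Du_y|^p\ge f(x)+b|Du_y|^p$, hence $(\det D^2u_y)^{1/N}\ge\lambda_1(D^2u_y)\ge f(x)+b|Du_y|^p$. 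The uniform estimate \eqref{unifestimate} bounds $\{u_y\}$ in $C^1(\overline\Omega)$, so $\underline u:=\sup_{y\in Y}u_y$ is a convex subsolution of \eqref{pbmongeampere}, continuous on $\overline\Omega$; since each $x\in\partial\Omega$ lies on some $\partial B_R(y_x)$ and each $u_y\le 0$, we get $\underline u=0$ on $\partial\Omega$, and $\underline u\le\overline u=0$. Perron then yields a solution $u$ with $\underline u\le u\le 0$, convex (as the relevant Perron envelope of convex subsolutions below the convex supersolution $0$ stays convex), continuous up to the boundary by the standard barrier argument using $\underline u$ and $\overline u$; uniqueness is comparison.

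The main obstacle I expect is the bookkeeping needed to keep Perron's method inside the convex cone: one must check that the upper envelope of convex subsolutions lying below $0$ is itself convex and a subsolution, that its upper semicontinuous envelope is a subsolution, that the lower-semicontinuous envelope of the Perron function is a (convex) supersolution, and that the boundary values are attained --- all of this is essentially in \cite[Chapters 3 and 5.C]{CIL}, but the restriction to convex functions (needed for ellipticity of $\det^{1/N}$) must be threaded through carefully, exactly as flagged in the remark following the statement of case (3) in Section~\ref{examples F} and in Remark~\ref{CPMASuper}. The other points --- the matching of constants in \eqref{condR} and \eqref{suffcond2}, the sign flip turning the concave ball-solution into a convex one, and the reduction $(\det D^2u)^{1/N}\ge\lambda_1(D^2u)$ for convex $u$ --- are routine given the results already established.
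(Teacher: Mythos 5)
Your proposal is correct and follows essentially the same route as the paper: comparison from Remark~\ref{CPMASuper}, the trivial supersolution $\overline u\equiv 0$, a convex subsolution obtained from the concave radial solutions of Proposition~\ref{existOnBallProp} with the sign reversed together with $\det(X)\ge(\lambda_1(X))^N$ for $X\ge0$, and Perron's method carried out inside the convex cone (the bump must be kept convex, which is exactly the point the paper spells out and you flag). The only inessential difference is that the paper produces the subsolution by applying Theorem~\ref{exi} with $c=0$ and $f$ replaced by $-f$ and then negating the resulting solution of the $\lambda_N$-problem, whereas you build it directly as $\sup_{y\in Y}(-\varphi_y)$ over the ball solutions, which just unpacks the same construction.
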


		\begin{proof}
		The function $\overline u=0$ is a trivial supersolution of \eqref{pbmongeampere}. By Theorem \ref{exi}, with $c=0$ and $f$
		replaced by $-f$, 		there exists $u$  concave solution of  
		\begin{equation*}
		\left\{
				\begin{array}{cl}
					\lambda_N(D^2u)+b|Du|^p=-f(x) & \textrm{in } \Omega\\
					u = 0 & \textrm{on } \partial \Omega.
				\end{array}
			\right.
			\end{equation*}
		Hence $\underline u=-u$ is a convex function satisfying 
				\begin{equation*}
		\left\{
				\begin{array}{cl}
					\lambda_1(D^2\underline u)=f(x)+b|D\underline u|^p & \textrm{in } \Omega\\
					\underline u = 0 & \textrm{on } \partial \Omega.
				\end{array}
			\right.
			\end{equation*}
				Since $\det(X)\geq\left(\lambda_1(X)\right)^N$ for any $X\geq0$, the function $\underline u$  is in turn subsolution of \eqref{pbmongeampere}. 
				
In view of Remark \ref{CPMASuper}, which ensures the validity of the comparison principle, we are in position to use the Perron's method. Since we are interested in convex solutions it is necessary to check that every manipulations of sub/supersolutions involved in the application of this method, continues to produce  convex functions. In particular we have to look at  the \lq\lq bump\rq\rq construction. 
		Let 
	\begin{equation}\label{PerronMA}
		u(x):=\sup\left\{w(x)\,:\,\underline u\leq w\leq0\;\text{and $w$ is a convex subsolution of \eqref{pbmongeampere}}\right\}.
		\end{equation}
	 If $u(x)$ fails to be a supersolution at $\hat x\in\Omega$ (note that $u$ is continuous in $\Omega$ since it is convex), then there would be a test function $\varphi\in C^2(\Omega)$ such that $u(\hat x)=\varphi(\hat x)$, $u(x)\geq\varphi(x)$ in $\Omega$ and 
		$$
		\left(\det(D^2\varphi(\hat x))\right)^\frac1N>f(\hat x)+b|D\varphi (\hat x)|^p.
		$$
		By continuity, for $\delta>0$ small enough, the function
		$\psi(x)=\varphi(x)+\frac{\delta^3}{8}-\frac{\delta}{2}|x-\hat x|^2$
		is convex in $B_\delta(\hat x)$ and moreover it  satisfies, in the classical sense, the inequality
		$$
		\left(\det(D^2\psi(x))\right)^\frac1N>f(x)+b|D\psi (x)|^p \quad \text{in $B_\delta(\hat x)$}.
		$$
			For $x\in\Omega$ such that $\frac\delta2\leq|x-\hat x|\leq \delta$ it turns out that $\psi(x)\leq u(x)$ and that the function
			$$
			U(x)=\left\{
			\begin{array}{rl}
			\max\left\{u(x),\psi(x)\right\} & \text{if $x\in B_\delta(\hat x)$}\\
			u(x) & \text{otherwise}
			\end{array}
			\right.
			$$
			is a convex viscosity subsolution of \eqref{pbmongeampere} and by construction $U\geq\underline u$. Moreover  $U\leq0$ on $\partial \Omega$. By comparison $U\leq0$ in $\Omega$. Hence $U(x)\leq u(x)$ in $\Omega$ by the definition \eqref{PerronMA} of $u$, but  $U(\hat x)=\psi(\hat x)=u(\hat x)+\frac{\delta^3}{3}$ therefore contradicting the maximality of $u$.
		This shows that \eqref{PerronMA} is in fact the unique convex solution of \eqref{pbmongeampere}.
		\end{proof}

		\section{Sublinear Hamiltonians}\label{sub}
		Throughout this section we shall assume $p<1$. Let us first note that  the weak comparison principle may fail for  
$$
F(x,D^2u)+|Du|^p=f(x)\quad\text{in $\Omega$}
$$
without extra condition on $f$.
 We list some  examples of nontrivial solutions (in the ball $B_R$ with zero Dirichlet boundary condition) for some equations of interest in this paper:
\begin{equation*}
\begin{array}{ll}

\lambda_1(D^2u)+|Du|^p=0 &\quad \displaystyle u(x)=\frac{(1-p)^{\frac{2-p}{1-p}}}{2-p}\left(R^{\frac{2-p}{1-p}}-|x|^{\frac{2-p}{1-p}}\right)\\
\, &\, \\
\lambda_i(D^2u)+|Du|^p=0\;\;\text{for $i=2,\ldots,N$} &\quad  \displaystyle u(x)=\frac{(1-p)}{2-p}\left(R^{\frac{2-p}{1-p}}-|x|^{\frac{2-p}{1-p}}\right)\\
\, &\, \\
\Delta u+|Du|^p=0 &\quad  \displaystyle u(x)=\frac{(1-p)}{(2-p)(N-1+\frac{1}{1-p})^{\frac{1}{1-p}}}\left(R^{\frac{2-p}{1-p}}-|x|^{\frac{2-p}{1-p}}\right)\\
\, &\, \\
\left(\det(D^2u)\right)^\frac{1}{N}=|Du|^p &\quad  \displaystyle u(x)=\frac{(1-p)^{\frac{1+N(1-p)}{N(1-p)}}}{2-p}\left(|x|^{\frac{2-p}{1-p}}-R^{\frac{2-p}{1-p}}\right)\,.
\end{array}
\end{equation*}

To obtain the existence of a unique viscosity solution we impose a sign condition on the forcing term $f(x)$. Consider the problem
\begin{equation}\label{genEq2}
			\left\{
				\begin{array}{cl}
					F(x,D^2u) + H(Du) = f(x) & \textrm{in }\Omega\\
					u = 0 & \textrm{on }\partial\Omega
				\end{array}
			\right.
		\end{equation}
		and assume: 
		\begin{enumerate}[label=(S\arabic*)]
	 \item\label{S1} $f\in C(\Omega)\cap L^\infty(\Omega)$ and $\displaystyle \sup_{\Omega}f<0$.
	\end{enumerate}

	\begin{theorem}[\textbf{Existence and uniqueness: sublinear case}]\label{exisublinear}
	Let $\Omega\subset\RN$ be a bounded uniformly convex domain. Under the assumptions \ref{ellipticity}-\ref{hom}, \ref{H2}, \ref{S1} and (CC) there exists a unique $u\in C(\overline\Omega)$ viscosity solution of \eqref{genEq2}.
	\end{theorem}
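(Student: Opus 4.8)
The plan is to run Perron's method, in the same spirit as the proof of Theorem~\ref{exi}, once (CC) has been upgraded to a genuine comparison principle --- which is needed both to legitimize Perron's method and for the uniqueness assertion. The comparison principle goes as follows. Let $u\in USC(\overline\Omega)$ be a subsolution and $v\in LSC(\overline\Omega)$ a supersolution of \eqref{genEq2}, and for $t>1$ put $v_t=tv$. If $\psi\in C^2$ touches $v_t$ from below at $x_0\in\Omega$, then $\frac1t\psi$ touches $v$ from below at $x_0$, so by \ref{hom} we get $\frac1t F(x_0,D^2\psi(x_0))+H(\frac1t D\psi(x_0))\le f(x_0)$; applying the first inequality of \ref{H2} with $\varepsilon=\frac1t$ to the gradient term gives $H(\frac1t D\psi(x_0))\ge\frac1t H(D\psi(x_0))$, whence $F(x_0,D^2\psi(x_0))+H(D\psi(x_0))\le t\,f(x_0)$. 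Since $f(x_0)\le\sup_\Omega f<0$ by \ref{S1}, we have $t\,f(x_0)<f(x_0)$, so $v_t$ is a \emph{strict} supersolution of \eqref{genEq2} (the boundary inequality $v_t\ge0$ being preserved). By (CC), $u\le v_t$ on $\overline\Omega$; letting $t\to1^+$ yields $u\le v$ on $\overline\Omega$, and applied to two solutions this already gives uniqueness.

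For existence, by the extremal inequalities \eqref{submaxinequ}--\eqref{submaxinequ2} it suffices to produce a concave $\overline u\in C(\overline\Omega)$ supersolution of $\beta\lambda_N(D^2u)+b|Du|^p=f(x)-c$ and a convex $\underline u\in C(\overline\Omega)$ subsolution of $\beta\lambda_1(D^2u)=f(x)$, both vanishing on $\partial\Omega$. Since $\sup_\Omega f<0\le H(0)$ (by \ref{H2}) and $F(x,X)\ge F(x,0)=0$ for $X\ge0$ (by \ref{ellipticity} and \ref{hom}), the constant $\underline u\equiv0$ is already a subsolution of \eqref{genEq2}. For the supersolution we repeat the radial construction of Proposition~\ref{existOnBallProp} on each $B_R(y)$, $y\in Y$, with $M:=\|(f-c)^-\|_\infty$; the essential point is that when $p\in(0,1)$ the map $s\mapsto bs^p$ is sublinear, so the auxiliary function $\varphi(r,s)=-\beta\frac sr+bs^p+M$ is increasing on $[0,s_1(r)]$ --- where now $s_1(r)=\left(\frac{\beta}{rpb}\right)^{\frac1{p-1}}$ is a \emph{maximum} point --- and then strictly decreasing to $-\infty$, with $\varphi(r,s_1(r))\ge\varphi(r,0)=M>0$. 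Hence $\varphi(r,\cdot)$ has, for \emph{every} $r\in(0,R]$ and with no restriction on $R$, a unique zero $s_0(r)>s_1(r)$ with $\partial_s\varphi(r,s_0(r))<0$; exactly as in Proposition~\ref{existOnBallProp} one finds $s_0\in C^1([0,R])$ increasing with $s_0(0)=0$ and $s_0'(0)=M/\beta$, so that $v_y(x):=\int_{|x-y|}^R s_0(t)\,dt$ is a nonnegative, concave function in $C^2(\overline{B_R(y)})$ solving $\beta\lambda_N(D^2v_y)+b|Dv_y|^p=-M\le f(x)-c$ in $B_R(y)$, with $v_y=0$ on $\partial B_R(y)$ and $\|v_y\|_{C^1}$ bounded independently of $y$. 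Then $\overline u:=\inf_{y\in Y}v_y$ is a nonnegative, Lipschitz, concave supersolution of $\beta\lambda_N(D^2u)+b|Du|^p=f(x)-c$ in $\Omega$, and $\overline u=0$ on $\partial\Omega$ since each $x\in\partial\Omega$ lies on some $\partial B_R(y_x)$.

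With $\underline u\equiv0\le\overline u$, both in $C(\overline\Omega)$ and null on $\partial\Omega$, $\underline u$ a sub- and $\overline u$ a supersolution of \eqref{genEq2}, Perron's method (legitimate by the comparison principle above) produces a solution $u\in C(\overline\Omega)$ with $\underline u\le u\le\overline u$, hence $u=0$ on $\partial\Omega$; uniqueness has already been noted. The main difficulty is the comparison principle: (CC) compares only against \emph{strict} supersolutions, so one must build one, and this is precisely where both the structural assumption \ref{H2} and the sign condition \ref{S1} enter essentially --- without $\sup_\Omega f<0$ the rescaling $v\mapsto tv$ provides no strict gain. A secondary, milder point is verifying that the ODE analysis of Proposition~\ref{existOnBallProp} persists for arbitrary radius when $p<1$: it does, because the obstruction $\varphi(r,s_1(r))<0$ that forced $R\le\bar R$ in the superlinear case is impossible once $s\mapsto bs^p$ is sublinear.
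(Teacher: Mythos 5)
Your proposal is correct and follows essentially the same route as the paper: the comparison principle is obtained exactly as in Proposition \ref{CP2} (scaling the supersolution by $t>1$ and using \ref{hom}, the first inequality in \ref{H2}, and \ref{S1} to gain uniform strictness before invoking (CC)), and existence comes from the zero subsolution together with the infimum over $y\in Y$ of radial concave solutions of $\beta\lambda_N(D^2u)+b|Du|^p=-M$ on the balls $B_R(y)$, which is precisely the content of Proposition \ref{exiballsublinear} (your inline ODE analysis of the ``second zero'' $s_0>s_1$ with no restriction on $R$ is the paper's argument), followed by Perron's method.
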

	
	\begin{remark}
	{\rm
	Differently from the case $p>1$, see \eqref{suffcond},  in the sublinear setting we prove existence and uniqueness of solutions for any $R$.
	}
	\end{remark}
	
	\begin{proposition}\label{exiballsublinear}
		For any $M>0$ and any ball $B_R(y)$ there exists  $u\in C^2(\overline{B_R(y)})$ radial solution of 
		\begin{equation}\label{extremeBallEq3}
			\left\{
				\begin{array}{cl}
					\beta\lambda_N(D^2u) + b|Du|^p = -M & \textrm{in } B_R(y)\\
					u = 0 & \textrm{on } \partial B_R(y).
				\end{array}
			\right.
		\end{equation}
		Moreover
		\begin{equation}\label{C2estimate}
		\left\|u\right\|_{C^1(\overline{B_R(y)})}\leq (1+R)\max\left\{M^\frac1p,\left(\frac{(1+b)R}{\beta}\right)^{\frac{1}{1-p}}\right\}.
		\end{equation}
		
	\end{proposition}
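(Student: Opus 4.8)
The plan is to mimic the ODE reduction used in the proof of Proposition~\ref{existOnBallProp}, but now exploiting the sublinearity $p\in(0,1)$ to avoid any restriction on $R$. Writing $u(x)=u(r)$ with $r=|x-y|$, a radial solution of \eqref{extremeBallEq3} with $\lambda_N$ as the top eigenvalue is sought among radially decreasing, concave functions, for which $\lambda_N(D^2u)=u''(r)$ whenever $u''(r)\ge u'(r)/r$; as in the superlinear case it actually suffices to find $u$ solving the ODE
\begin{equation*}
\beta\frac{u'(r)}{r}+b\,(-u'(r))^p=-M,\qquad r\in(0,R),\qquad u(R)=0,
\end{equation*}
together with $u'(0)=0$, $u'(r)<0$ and $u'(r)/r\ge u''(r)$ on $(0,R)$. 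Again this is an ``algebraic'' equation in $s=-u'(r)\ge0$: setting $\varphi(r,s)=-\beta\frac sr+b s^p+M$, one has $u$ solving the ODE iff $\varphi(r,-u'(r))=0$.

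The key structural difference is that for $p\in(0,1)$ the map $s\mapsto bs^p-\beta s/r$ is concave, equal to $0$ at $s=0$, tends to $-\infty$ as $s\to+\infty$, and has a unique interior maximum; hence $s\mapsto\varphi(r,s)$ starts at $\varphi(r,0)=M>0$, increases, then decreases to $-\infty$. Therefore for every $r\in(0,R]$ there is a \emph{unique} zero $s_0(r)>0$, and at that zero $\partial_s\varphi(r,s_0(r))=bp\,s_0(r)^{p-1}-\beta/r<0$, so the implicit function theorem gives $s_0\in C^1((0,R])$ with
\begin{equation*}
s_0'(r)=-\frac{\partial_r\varphi(r,s_0(r))}{\partial_s\varphi(r,s_0(r))}=-\frac{\beta s_0(r)}{r^2\,\partial_s\varphi(r,s_0(r))}>0,
\end{equation*}
so $s_0$ is increasing, exactly as in Proposition~\ref{existOnBallProp}; no upper bound on $R$ is needed because $\varphi(r,s)\to-\infty$ forces a zero regardless of the sign of the interior maximum. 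For the behaviour at $r=0$, testing $\varphi(r,r^\alpha)$ for a suitable $\alpha\in(0,1)$ (here one should pick $\alpha$ so that $r^{\alpha p}$ dominates $\beta r^{\alpha-1}$ near $0$, e.g. $\alpha$ close to $1$, using $p<1$ only to keep $s_0$ comparable to a power) shows $s_0(r)\to0$ and, from the ODE, $s_0'(r)\to M/\beta$; this gives $s_0\in C^1([0,R])$, hence $u(r):=\int_r^R s_0(t)\,dt$ is $C^2(\overline{B_R(y)})$. The monotonicity $s_0'\ge0$ gives $u'(r)=-s_0(r)$ decreasing, and then, exactly as in the superlinear proof, $u'(r)/r=-\frac1\beta(M+b(-u'(r))^p)$ is decreasing in $r$, which yields $u''(r)\le u'(r)/r$; thus $u$ is concave and solves \eqref{extremeBallEq3} in the viscosity (indeed classical) sense.

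Finally, for the $C^1$ estimate \eqref{C2estimate}: since $s_0$ is increasing, $\|u\|_\infty=u(0)\le R\,s_0(R)$ and $\|u'\|_\infty=s_0(R)$, so it suffices to bound $s_0(R)$. From $\varphi(R,s_0(R))=0$, i.e. $b\,s_0(R)^p+M=\beta s_0(R)/R$, one estimates $s_0(R)$ in two regimes: either the term $M$ dominates, giving roughly $s_0(R)\lesssim (RM/\beta)$ — but more usefully $bs_0(R)^p\le \beta s_0(R)/R$ forces $s_0(R)\le((1+b)R/\beta)^{1/(1-p)}$ when $s_0(R)\ge1$ type considerations kick in — or $M\ge b s_0(R)^p$ gives $s_0(R)\le M^{1/p}$. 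Combining the two cases yields $s_0(R)\le\max\{M^{1/p},((1+b)R/\beta)^{1/(1-p)}\}$ and hence \eqref{C2estimate}. The main (very mild) obstacle is purely bookkeeping: choosing the comparison exponents at $r=0$ correctly and splitting the algebraic estimate for $s_0(R)$ into the right cases so that the stated explicit constant comes out; everything else is a direct transcription of Proposition~\ref{existOnBallProp} with the convexity of $\varphi(r,\cdot)$ replaced by its concavity, which conveniently removes the smallness condition on $R$.
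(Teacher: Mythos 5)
Your route is the paper's route almost verbatim: reduce to the ODE $\beta\frac{u'}{r}+b(-u')^p=-M$, study $\varphi(r,s)=-\beta\frac sr+bs^p+M$, use that for $p<1$ the map $s\mapsto\varphi(r,s)$ is concave, increasing then decreasing to $-\infty$, so that for every $r$ there is a unique zero $s_0(r)$ with $\partial_s\varphi(r,s_0(r))<0$ and no smallness condition on $R$; then $s_0\in C^1$, increasing, $u=\int_r^R s_0$, and $u''\le u'/r$ from the monotonicity of $u'/r$, plus the $C^1$ bound via $s_0(R)$. Two steps, however, are not correct as written, and they are precisely the places where the sublinear case is not a ``direct transcription'' of Proposition~\ref{existOnBallProp}. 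First, the assertion that ``from the ODE, $s_0'(r)\to M/\beta$'' hides the real issue: here $s_0'(r)=\frac{M+b\,s_0(r)^p}{\beta-bp\,r\,s_0(r)^{p-1}}$ and, since $p<1$, $s_0(r)^{p-1}\to+\infty$ as $r\to0^+$, so you must prove $r\,s_0(r)^{p-1}\to0$. Your only quantitative information near $0$ is the upper bound $s_0(r)<r^\alpha$ (and the parenthetical there is backwards: one needs $\beta r^{\alpha-1}$ to dominate so that $\varphi(r,r^\alpha)<0$; any $\alpha\in(0,1)$ works), and an upper bound on $s_0$ pushes $s_0^{p-1}$ up, i.e.\ it goes the wrong way. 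The missing ingredient is a lower bound: $\varphi\left(r,\frac{Mr}{\beta}\right)=b\left(\frac{Mr}{\beta}\right)^p>0$ gives $s_0(r)>\frac{Mr}{\beta}$, hence $r\,s_0(r)^{p-1}\le\left(\frac\beta M\right)^{1-p}r^p\to0$ (equivalently, the ODE gives $s_0(r)/r\to M/\beta$ and then $r\,s_0(r)^{p-1}=(s_0(r)/r)^{p-1}r^p\to0$). This is exactly the extra step the paper inserts.

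Second, your case split for the bound on $s_0(R)$ does not give the stated constant: the inequality $b\,s_0(R)^p\le\beta s_0(R)/R$ always holds and yields a \emph{lower} bound on $s_0(R)$, not $s_0(R)\le\left(\frac{(1+b)R}{\beta}\right)^{\frac1{1-p}}$; and the case $M\ge b\,s_0(R)^p$ only gives $s_0(R)\le (M/b)^{1/p}$, which exceeds $M^{1/p}$ when $b<1$. The correct dichotomy is on $M$ versus $s_0(R)^p$ (no $b$): if $M\le s_0(R)^p$, then $\beta s_0(R)/R=M+b\,s_0(R)^p\le(1+b)s_0(R)^p$, so $s_0(R)\le\left(\frac{(1+b)R}{\beta}\right)^{\frac1{1-p}}$; if $M>s_0(R)^p$, then $s_0(R)<M^{1/p}$. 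Equivalently, one checks directly that $\varphi(R,\bar s)\le0$ for $\bar s=\max\left\{M^{1/p},\left(\frac{(1+b)R}{\beta}\right)^{\frac1{1-p}}\right\}$, which is the paper's one-line verification. With these two repairs your argument coincides with the paper's proof.
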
	
	\begin{proof}
	For  $R>0$ let us consider the ODE problem 
	\begin{equation}\label{31eq1}
					\left\{
						\begin{array}{cl}
							\beta\frac{u'(r)}{r} + b(-u'(r))^p = -M, & r\in (0,R)\\
							u''(r)\leq \frac{u'(r)}{r}<0, & r\in (0,R)\\
							u'(0)=u(R)=0.
						\end{array}
					\right.
				\end{equation}
				We claim that such problem has a unique solution. Similarly to the proof of Proposition \ref{existOnBallProp}, let us observe that $u$ is solution of the equation in \eqref{31eq1} if, and only if, for any $r\in(0,R)$ it holds that $\varphi(r,-u'(r))=0$,  where 
				$$
\varphi(r,s)=-\beta\frac sr+bs^p+M\qquad \text{for $r,s>0$.}
$$
Now, since $p\in(0,1)$, differently from the proof of Proposition \ref{existOnBallProp}, for any fixed $r\in(0,+\infty)$, the function $s\mapsto\varphi(r,s)$ is concave in $[0,\infty)$. Moreover it is increasing for $s\in[0,s_1(r)]$, then decreasing for $s\geq s_1(r)$, where
\begin{equation}\label{31eq2}
s_1(r)=\left(\frac{bpr}{\beta}\right)^{\frac{1}{1-p}}.
\end{equation} 
Moreover $\displaystyle\lim_{s\to+\infty}\varphi(r,s)=-\infty$. Hence there exists (a unique) $s_0=s_0(r)> s_1(r)$ such that $\varphi(r,s_0(r))=0$. Since  $$\partial_s\varphi(r,s_0(r))=-\frac\beta r+bp(s_0(r))^{p-1}<-\frac\beta r+bp(s_1(r))^{p-1}=0\qquad \text{ for $r>0$},$$ by the implicit function theorem, $s_0(r)\in C^1(0,+\infty)$ and $s_0'(r)>0$ for $r>0$. In fact $s_0\in C^1([0,+\infty))$. To show this we first observe that, as in  (iii)-Proposition \ref{existOnBallProp} we have $\displaystyle\lim_{r\to0^+}s_0(r)=0$, since $\varphi(r,r^\alpha)<0$ for  $\alpha\in(0,1) $ and $r$ sufficiently small. Moreover, exploiting the definition of $s_0$, i.e. $\varphi(r,s_0(r))=0$, we have
\begin{equation}\label{31eq3}
s'_0(r)=-\frac{\beta s_0(r)}{r^2\partial_s\varphi(r,s_0(r))}=\frac{M+b(s_0(r))^p}{\beta-bp\frac{r}{(s_0(r))^{1-p}}}.
\end{equation}
Since 
$$
\varphi\left(r,\frac{Mr}{\beta}\right)=b\left(\frac{Mr}{\beta}\right)^p>0,
$$
we deduce that $s_0(r)>\frac{Mr}{\beta}$ for $r>0$. Then $$0<\frac{r}{(s_0(r))^{1-p}}<\left(\frac{\beta}{M}\right)^{1-p}r^p\qquad\text{for $r>0$}$$ and 
$$
\lim_{r\to0^+}\frac{r}{(s_0(r))^{1-p}}=0.
$$ 
 Form this we can pass to the limit, as $r\to0^+$, in \eqref{31eq3} and obtain that  
$$ 
\lim_{r\to0^+}s_0'(r)=\frac M\beta.
$$
This shows that $s_0(r)\in C^1([0,\infty))$.\\
Summing up the function $s_0=s_0(r)\in C^1([0,R])$ for any $R>0$ and satisfies
\begin{equation}\label{31eq4}
	s_0(0)=0\;,\qquad s'_0(r)>0 \quad\text{for $r\in[0,R]$}.
	\end{equation}
Moreover, since
$$
\beta \frac{s_0(r)}{r}=b(s_0(r))^p+M\qquad\text{for $r>0$},
$$ 
we infer that the map $r\mapsto\frac{s_0(r)}{r}$ is increasing. In particular
\begin{equation}\label{31eq5}
s'_0(r)\geq\frac{s_0(r)}{r}\qquad\text{for $r\in(0,R]$.}
\end{equation}
	
For $r\in[0,R]$ let $u=u(r)$ be the function defined by the formula
$$
u(r):=\int_r^Rs_0(t)\,dt.
$$
By \eqref{31eq4}-\eqref{31eq5} it follows that $u$ is the solution of \eqref{31eq1} and, slightly abusing notation, that $u(x) = u(r)$ with $r=|x-y|$ is solution of \eqref{extremeBallEq3}. To complete the proof it remains to prove the estimate \eqref{C2estimate}. Since $u$ is monotone decreasing and concave, then
\begin{equation}\label{61eq1}
\left\|u\right\|_{C^1(\overline{B_R(y)})}=u(0)-u'(R)\leq(1+R)s_0(R).
\end{equation}
Moreover, setting $$\bar s=\max\left\{M^\frac1p,\left(\frac{(1+b)R}{\beta}\right)^{\frac{1}{1-p}}\right\},$$
by a straightforward computation we have 
$$
\varphi(R,\bar s)=-\beta\frac{\bar s}{r}+b{\bar s^p}+M\leq0.
$$
Hence $s_0(R)\leq\bar s$. From \eqref{61eq1} we obtain \eqref{C2estimate}.
	\end{proof}
	
	\begin{proposition}[\textbf{Comparison principle: sublinear case}]\label{CP2}
	Let $\Omega$ be a bounded domain. Suppose that \ref{hom}, \ref{H2}, \ref{S1} and (CC) hold.  
	 If $u\in USC(\overline{\Omega})$, $v\in LSC(\overline{\Omega})$ are respectively  sub and supersolution of \eqref{genEq2}, then $u\leq v$ in $\overline{\Omega}$.
	\end{proposition}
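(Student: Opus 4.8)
The plan is to reproduce the perturbation scheme of Proposition \ref{CP}, with the considerable simplification that here the strict supersolution needed to invoke (CC) can be manufactured directly from $v$ by a dilation. Note that \ref{ellipticity} plays no role: only the positive $1$-homogeneity \ref{hom}, the sub-homogeneity of $H$ built into \ref{H2}, the sign of $f$ from \ref{S1}, and (CC) will be used.

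Concretely, for $\sigma>1$ I would set $v_\sigma:=\sigma v$ and check that it is a strict supersolution of \eqref{genEq2}. On $\partial\Omega$ one has $v\geq0$ (being $v$ a supersolution of \eqref{genEq2}), hence $v_\sigma\geq0$ there as well. In $\Omega$, if $\psi\in C^2$ touches $v_\sigma$ from below at $x_0$, then $\tfrac1\sigma\psi$ touches $v$ from below at $x_0$, so the supersolution inequality for $v$ (written with test function $\tfrac1\sigma\psi$), multiplied by $\sigma$ and combined with \ref{hom}, gives $F(x_0,D^2\psi(x_0))+\sigma H(\tfrac1\sigma D\psi(x_0))\le\sigma f(x_0)$; then the inequality $\varepsilon H(\xi)\le H(\varepsilon\xi)$ of \ref{H2}, used with $\varepsilon=\tfrac1\sigma\in(0,1)$ and $\xi=D\psi(x_0)$, yields
$$F(x_0,D^2\psi(x_0))+H(D\psi(x_0))\le\sigma f(x_0)= f(x_0)+(\sigma-1)f(x_0)\le f(x_0)+(\sigma-1)\sup_\Omega f<f(x_0).$$
This is where \ref{S1} is essential: dilating the supersolution \emph{upward} converts the negativity of $f$ into a uniform strict gap. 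Having produced a strict supersolution $v_\sigma$ with $v_\sigma\geq0$ on $\partial\Omega$, (CC) gives $u\le v_\sigma=\sigma v$ on $\overline\Omega$; letting $\sigma\to1^+$ gives $u\le v$ on $\overline\Omega$.

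There is essentially no obstacle to overcome once the perturbation $v\mapsto\sigma v$ with $\sigma>1$ is identified: all the work is the short homogeneity computation above. The one conceptual point, by contrast with the superlinear case, is that here the size of $\Omega$ is irrelevant — no auxiliary radial solution on a ball and no parameter $\delta$ are needed — because the strictifying perturbation is global and costs nothing at the boundary; the price is paid instead in the qualitative restriction \ref{S1} on $f$. (If one prefers, the argument can equally be run by contradiction: assuming $u(z_0)>v(z_0)$ at some $z_0\in\Omega$ and using (CC) to bound $u-v_\sigma$ by $\sup_{\partial\Omega}(u-v_\sigma)\le0$, then letting $\sigma\to1^+$.)
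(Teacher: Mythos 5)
Your proposal is correct and is essentially the paper's own proof: the paper likewise dilates the supersolution to $v_\varepsilon=(1+\varepsilon)v$ (your $\sigma=1+\varepsilon$), uses the $1$-homogeneity \ref{hom} and the sub-homogeneity inequality in \ref{H2} to get $F(x,D^2v_\varepsilon)+H(Dv_\varepsilon)\leq f(x)+\varepsilon\sup_\Omega f<f(x)$ via \ref{S1}, applies (CC), and lets $\varepsilon\to0^+$. Your explicit test-function verification and boundary remark only spell out what the paper leaves implicit.
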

	\begin{proof}
	For $\varepsilon>0$ the function $v_\varepsilon(x)=(1+\varepsilon) v(x)\in LSC(\overline{\Omega})$ is a strict supersolution of \eqref{genEq2}, since by \ref{hom}, \ref{H2} and \ref{S1} it holds that the following inequalities hold in the viscosity sense:
	$$
	F(x,D^2v_\varepsilon)+H(Dv_\varepsilon)\leq-(1+\varepsilon)H(Dv)+(1+\varepsilon)f(x)+H(Dv_\varepsilon)\leq f(x)+\varepsilon\sup_{\Omega}f\quad\text{in $\Omega$}.
	$$
	By (CC) we have $u\leq v_\varepsilon$ in $\Omega$ and we conclude by sending $\varepsilon\to0^+$.	\end{proof}
	
	\begin{proof}[Proof of Theorem \ref{exisublinear}]
	In view of Proposition \ref{exiballsublinear}, with $M=\left\|f\right\|_\infty+c$, for any ball $B_R(y)$ there exists a concave solution $u_y=u_y(x)$ of 
	\begin{equation*}
			\left\{
				\begin{array}{cl}
					\beta\lambda_N(D^2u) + b|Du|^p = -(\left\|f\right\|_\infty+c) & \textrm{in } B_R(y)\\
					u = 0 & \textrm{on } \partial B_R(y).
				\end{array}
			\right.
		\end{equation*}
	Since $\displaystyle\Omega=\bigcap_{y\in Y}B_R(y)$, then by the structural conditions \ref{ellipticity}, \ref{H2}, we infer that for any $y\in Y$ the function $u_y$ is in turn a supersolution of \eqref{genEq2}. Using the stability of the supersolution property under inf-operation and the uniform (with respect to $y$) estimate \eqref{C2estimate}, we infer that $\displaystyle v(x):=\inf_{y\in Y}u_y(x)\in\rm{Lip}(\overline\Omega)$ is a positive concave supersolution of \eqref{genEq2} such that $v=0$ on $\partial\Omega$. On the other hand, by the assumption \ref{S1}, the function $u=0$ is a subsolution of \eqref{genEq2}. By the Perron's method and by Proposition \ref{CP2} we conclude that $$
	u(x):=\sup\left\{w(x)\,:\,0\leq w\leq v\;\;\text{and $w$ is a subsolution of \eqref{genEq2}}\right\}
	$$
	is the unique viscosity solution of \eqref{genEq2}.
	\end{proof}
	
	We end this section by adapting the above argument to get  existence and uniqueness of convex solutions for the sublinear, $p<1$, Monge-Ampère equation
	\begin{equation}\label{pbmongeampere2}
			\left\{
				\begin{array}{cl}
					\left(\det(D^2u)\right)^\frac1N = f(x)+b|Du|^p & \textrm{in }\Omega\\
					u = 0 & \textrm{on }\partial\Omega.
				\end{array}
			\right.
		\end{equation}
		Assumption \ref{S1} is here replaced by
		\begin{enumerate}[label=(S'\arabic*)]
	 \item\label{S2} $f\in C(\Omega)\cap L^\infty(\Omega)$ and $\displaystyle \inf_{\Omega}f>0$.
	\end{enumerate}
	
	\begin{remark}\label{CPMA}
		\rm It is worth to point out that  the standard Lipschitz continuity assumption in the gradient variable, see \cite[(5.22)-Theorem V.2]{IL},  is  no longer true in our framework since $p$ is strictly less than 1. Nevertheless  arguing in a similar way to the proof of Proposition \ref{CP2},  using the assumption \ref{S2} and the fact that $p<1$, it is readily seen that  the comparison principle between sub and supersolutions of \eqref{pbmongeampere2}  still holds.
		\end{remark}
	
	\begin{theorem}
	Let $\Omega\subset\RN$ be a bounded uniformly convex domain. If \ref{S2} hold, then there exists a unique $u\in C(\overline\Omega)$ viscosity solution of \eqref{pbmongeampere2}.
	\end{theorem}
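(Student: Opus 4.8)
The plan is to mimic the proof of Theorem~\ref{MongeA1}, with Proposition~\ref{exiballsublinear} replacing the superlinear existence result on balls and with Remark~\ref{CPMA} providing the comparison principle for \eqref{pbmongeampere2}. A supersolution is immediate: since $\inf_\Omega f>0$, the function $\overline u\equiv0$ satisfies $\bigl(\det(D^2\overline u)\bigr)^{1/N}-b|D\overline u|^p=0\le f$ in $\Omega$ and vanishes on $\partial\Omega$.

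For a lower barrier, fix $R$ and $Y$ with $\Omega=\bigcap_{y\in Y}B_R(y)$. By Proposition~\ref{exiballsublinear} (with $\beta=1$ and $M=\|f\|_\infty>0$) each ball carries a concave radial $u_y\in C^2(\overline{B_R(y)})$ with $\lambda_N(D^2u_y)+b|Du_y|^p=-\|f\|_\infty$ in $B_R(y)$ and $u_y=0$ on $\partial B_R(y)$; since $-\|f\|_\infty\le-f$, each $u_y$ is a (classical, hence viscosity) supersolution of $\lambda_N(D^2w)+b|Dw|^p=-f$ in $\Omega$, so by the uniform estimate \eqref{C2estimate} and stability under infima, $v:=\inf_{y\in Y}u_y$ is a nonnegative concave Lipschitz function on $\overline\Omega$, vanishing on $\partial\Omega$, that is a supersolution of the same equation. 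Set $\underline u:=-v$: it is convex, nonpositive, vanishes on $\partial\Omega$, and since $\lambda_1(D^2(-v))=-\lambda_N(D^2v)$ it is a viscosity subsolution of $\lambda_1(D^2w)=f+b|Dw|^p$. Because $\underline u$ is convex, every $C^2$ function $\varphi$ touching it from above at a point $x_0$ has $D^2\varphi(x_0)\ge0$, whence $\det(D^2\varphi(x_0))\ge\bigl(\lambda_1(D^2\varphi(x_0))\bigr)^N$; combined with $\lambda_1(D^2\varphi(x_0))\ge f(x_0)+b|D\varphi(x_0)|^p>0$ this yields $\bigl(\det(D^2\varphi(x_0))\bigr)^{1/N}\ge f(x_0)+b|D\varphi(x_0)|^p$, so $\underline u$ is a subsolution of \eqref{pbmongeampere2}. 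By comparison, $\underline u\le\overline u=0$ in $\Omega$.

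Then I would apply Perron's method over the class of convex subsolutions trapped between $\underline u$ and $0$. Setting
$$
u(x):=\sup\left\{w(x):\ \underline u\le w\le0,\ w\ \text{convex viscosity subsolution of \eqref{pbmongeampere2}}\right\},
$$
$u$ is convex (a supremum of convex functions), hence continuous in $\Omega$, and being squeezed between $\underline u$ and $0$ it attains the boundary datum $0$ continuously, so $u\in C(\overline\Omega)$. If $u$ failed to be a supersolution at some $\hat x\in\Omega$, a $C^2$ test function $\varphi$ touching $u$ from below at $\hat x$ with $\bigl(\det(D^2\varphi(\hat x))\bigr)^{1/N}>f(\hat x)+b|D\varphi(\hat x)|^p$ would have $D^2\varphi(\hat x)\ge0$ and $\det(D^2\varphi(\hat x))>0$, hence $D^2\varphi(\hat x)>0$; consequently, for small $\delta>0$, $\psi(x)=\varphi(x)+\tfrac{\delta^3}{8}-\tfrac{\delta}{2}|x-\hat x|^2$ is convex on $B_\delta(\hat x)$ and a strict classical subsolution there, and gluing it to $u$ exactly as in Theorem~\ref{MongeA1} produces a convex subsolution $U$ with $\underline u\le U$ and $U\le0$ on $\partial\Omega$. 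Comparison (Remark~\ref{CPMA}) forces $U\le0$ in $\Omega$, hence $U\le u$, contradicting $U(\hat x)=\psi(\hat x)>u(\hat x)$. Uniqueness is again Remark~\ref{CPMA}.

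The one genuinely delicate point, as already in Theorem~\ref{MongeA1}, is that the Monge-Amp\`ere operator is elliptic only on the cone of nonnegative Hessians, so every operation in Perron's method must be checked to preserve convexity; this is what forces the supremum defining $u$ to range over convex competitors, and why the positive definiteness of $D^2\varphi(\hat x)$ — a consequence of $\det(D^2\varphi(\hat x))>0$ together with the convexity of $u$ — is exactly what keeps the bump $\psi$ convex. Everything else is a routine transcription of the arguments already carried out for Theorems~\ref{MongeA1} and \ref{exisublinear}.
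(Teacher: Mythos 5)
Your proposal is correct and follows essentially the same route as the paper: $\overline u\equiv0$ as supersolution, a convex subsolution obtained from the sublinear radial barriers on balls together with $\det(X)\ge(\lambda_1(X))^N$, and Perron's method restricted to convex subsolutions with comparison from Remark \ref{CPMA}. The only cosmetic difference is that you build the barrier directly from Proposition \ref{exiballsublinear} (getting a subsolution of the $\lambda_1$-equation), whereas the paper simply invokes Theorem \ref{exisublinear} — whose proof is exactly your inf-over-balls construction — and you spell out the bump/convexity details that the paper delegates to the proof of Theorem \ref{MongeA1}.
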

		\begin{proof}
		Theorem \ref{exisublinear} yields the existence of a viscosity solution $\underline u$ of 
		\begin{equation*}
		\left\{
				\begin{array}{cl}
					\lambda_1(D^2\underline u)=f(x)+b|D\underline u|^p & \textrm{in } \Omega\\
					\underline u = 0 & \textrm{on } \partial \Omega.
				\end{array}
			\right.
			\end{equation*}
			From this and using the inequality $\det(X)\geq\left(\lambda_1(X)\right)^N$ for any $X\geq0$, we infer that $\underline u$ is a subsolution of \eqref{pbmongeampere2}. The function $\overline u=0$ is instead a trivial supersolution of \eqref{pbmongeampere2}.
			In view of Remark \ref{CPMA}, we can use the Perron's method in the class of convex subsolution (with the same observations as those did  in the superlinear case, see the proof of Theorem \ref{MongeA1}), so obtaining  the existence  of a unique convex viscosity solution  of \eqref{pbmongeampere2}. 
		\end{proof}

	\bigskip
\noindent
\textsc{I. Birindelli, G. Galise}: Dipartimento di Matematica \lq\lq Guido Castelnuovo\rq\rq,\\
Sapienza Universit\`a di Roma, P.le Aldo Moro 2, I-00185 Roma, Italy.\\
E-mail: \texttt{isabeau@mat.uniroma1.it}\\
E-mail: \texttt{galise@mat.uniroma1.it}

\smallskip
\noindent\textsc{A. Rodr\'iguez}: Departamento de Matem\'atica y Ciencia de la Computaci\'on,\\
Universidad de Santiago de Chile, Avda. Libertador General Bernardo O’Higgins 3383, Santiago, Chile.\\
E-mail: \texttt{andrei.rodriguez@usach.cl}\\
	
\end{document}